\newtheorem{theorem}{Theorem}[section]
\newtheorem{lemma}[theorem]{Lemma} 
\newtheorem{definition}[theorem]{Definition} 
\newtheorem{problem}[theorem]{Problem} 
\newtheorem{corollary}[theorem]{Corollary}
\numberwithin{equation}{section}
\def\hw{{\widehat w}}
\def\hmu{{\widehat{\mu}}}
\def\hmu{{\widehat{\mu}}}
\def\astmult{{\times}}
\def\tmu{{\mu^{\times}}}
\def\htmu{{\widehat{\mu}^{\times}}}
\def\eps{{\varepsilon}}
\def\cS{{\mathcal S}}
\begin{document}

\title[Exponents of multiplicative $p$-adic approximation]{Classical and uniform exponents of multiplicative $p$-adic approximation}

\author{Yann Bugeaud}
\address{Universit\'e de Strasbourg, Math\'ematiques,
7, rue Ren\'e Descartes, 67084 Strasbourg  (France)}
\email{bugeaud@math.unistra.fr}

\author{Johannes Schleischitz}
\address{Middle East Technical University, Northern Cyprus Campus, Kalkanli, G\"uzelyurt}
\email{johannes@metu.edu.tr}

\begin{abstract}
Let $p$ be a prime number and $\xi$ an irrational $p$-adic number. 
Its irrationality exponent $\mu (\xi)$  is the 
supremum of the real numbers $\mu$ for which the system of inequalities
$$
0 < \max\{|x|, |y|\} \le X, \quad |y \xi - x|_{p} \leq X^{-\hmu}    
$$
has a solution in integers $x, y$ for arbitrarily large real number $X$. 
Its multiplicative irrationality exponent $\tmu (\xi)$ (resp., uniform multiplicative  
irrationality exponent $\htmu (\xi)$)  is the 
supremum of the real numbers $\hmu$ for which the system of inequalities
$$
0 < |x y|^{1/2} \le X, \quad |y \xi - x|_{p} \leq X^{-\hmu}    
$$
has a solution in integers $x, y$ for arbitrarily large 
(resp., for every sufficiently large) real number $X$. It is not difficult to 
show that $\mu (\xi) \le \tmu(\xi) \le 2 \mu (\xi)$ and $\htmu (\xi) \le 4$. 
We establish that the ratio between the multiplicative 
irrationality exponent $\tmu$ and the irrationality exponent $\mu$ can take 
any given value in $[1, 2]$. 
Furthermore, we prove that $\htmu (\xi) \le (5 + \sqrt{5})/2$ 
for every $p$-adic number $\xi$. 
\end{abstract}

\subjclass[2010]{11J61, 11J04}
\keywords{rational approximation, $p$-adic number, exponent of approximation}

\maketitle

\section{Introduction}\label{intro}

Let $\alpha$ be an irrational real number. Its irrationality exponent $\mu (\alpha)$ is the 
supremum of the real numbers $\mu$ for which
\begin{equation}  \label{eq:irr0}
0 < |y \alpha - x|  \leq \max\{ |x|, |y| \}^{-\mu + 1}
\end{equation}  
or, equivalently, 
$$
0 < |\alpha - x/y| \le \max\{ |x|, |y| \}^{-\mu}
$$
has infinitely many solutions in nonzero integers $x, y$. 
Since, for all nonzero integers $x, y$ with $|y \alpha - x| \le 1$, we have
$$
\min\{|x|, |y|\} \ge  \min\{|\alpha|, |\alpha|^{-1}\} \cdot \max \{|x|, |y|\} - 1,
$$
the integers $|x|$ and $|y|$ in \eqref{eq:irr0} have the same order of magnitude and 
we can replace $\max\{ |x|, |y| \}$ in \eqref{eq:irr0} by $|x y|^{1/2}$. 
The same observation does not hold for rational approximation in $p$-adic fields, where similar 
definitions give rise to two different irrationality exponents. 

Throughout this paper, we let $p$ denote a prime number 
and $\mathbb{Q}_{p}$ the field 
of $p$-adic numbers. 
Let $\xi$ be an irrational $p$-adic number. The irrationality exponent $\mu (\xi)$ of $\xi$ is the 
supremum of the real numbers $\mu$ for which
\begin{equation}  \label{eq:irr1}
0 < |y \xi - x|_{p} \leq \max\{ |x|, |y| \}^{-\mu}    
\end{equation}   
has infinitely many solutions in nonzero integers $x, y$. 
Unlike in the real case, the integers $|x|$ and $|y|$ in \eqref{eq:irr1} 
do not necessarily have the same order of magnitude, and one of them can be 
much larger than the other one. 
This has recently been pointed out 
by de Mathan \cite{BdM19}, who studied whether $p$-adic numbers $\xi$ such that 
$$
\inf_{x, y \not= 0} \, |xy| \cdot |y \xi - x|_p > 0
$$
do exist; see also \cite{EiKl07,BaBu21}.\\  

Consequently, it is meaningful to also define the 
multiplicative irrationality exponent $\tmu (\xi)$ of $\xi$ as the 
supremum of the real numbers $\tmu$ for which
\begin{equation}  \label{eq:irr2}
0 < |y \xi - x|_{p} \leq  \bigl(| x y |^{1/2} \bigr)^{-\tmu}    
\end{equation}   
has infinitely many solutions in nonzero integers $x, y$. 
It follows from the Minkowski Theorem (see \cite{Mah34a} or \cite{Mah34b})
and the obvious inequalites
$\max\{|x|, |y|\} \le |xy| \le (\max\{|x|, |y|\})^2$ 
valid for all nonzero integers $x, y$ that we have
\begin{equation}  \label{eq:ineq}
2 \le \mu(\xi) \le \tmu (\xi) \le 2 \mu (\xi). 
\end{equation}   
An easy covering argument shows that  
$\tmu(\xi) = 2$ for almost all $p$-adic number $\xi$. 
Furthermore, the right hand side inequality of 
\eqref{eq:ineq} can be an equality: 
for any sufficiently large integer $c$, 
the $p$-adic number $\xi_c = 1+ \sum_{j \ge 1} p^{c^j}$ 
is well approximated by integers constructed 
by truncating its Hensel expansion and 
it satisfies $\tmu (\xi_c) = 2 \mu (\xi_c)$;
see Theorem \ref{1dspectrum2}. 

Our first results, gathered in Section 2, are concerned with the study of the spectra 
of the exponents of approximation $\mu$ and $\tmu$, 
that is, the set of values taken by these exponents. 
We also investigate the spectrum of their quotient $\tmu / \mu$ and show that it is 
equal to the whole interval $[1, 2]$.

Beside the exponents of approximation $\mu$ and $\tmu$, we 
consider the uniform exponents $\hmu$ and $\htmu$ defined as follows.

\begin{definition}  \label{def1}
Let $\xi$ be an irrational $p$-adic number.  
The uniform irrationality exponent $\hmu (\xi)$ of $\xi$ is the 
supremum of the real numbers $\hmu$ for which the system
\begin{equation}  \label{eq:irr3}
0 < \max\{ |x|, |y| \} \le X, \quad |y \xi - x|_{p} \leq X^{-\hmu}    
\end{equation}   
has a solution for every sufficiently large real number $X$. 
The uniform multiplicative irrationality exponent $\htmu (\xi)$ of $\xi$ is the 
supremum of the real numbers $\htmu$ for which the system
\begin{equation}  \label{eq:irr4}
0 < |x y|^{1/2} \le X, \quad |y \xi - x|_{p} \leq X^{-\htmu}    
\end{equation}   
has a solution for every sufficiently large real number $X$. 
\end{definition}

Let us note that, beside the classical exponent    
(where the points $(x, y)$ belong to a square of area $4 X^2$ centered at the origin) 
and the mulitplicative exponent (where the points $(x, y)$ 
belong to a set of area $16 X^2 (\log X)$ bounded 
by four branches of hyperbola), 
we can as well consider weighted exponents
(where the points $(x, y)$ belong to a rectangle of area $4 X^2$ centered at the origin). 
Although most of our results can be extended to the weighted setting, 
we restrict for simplicity our attention to the
somehow most natural exponents $\tmu$ and $\htmu$ defined above.

We point out that $x$ and $y$ are not assumed to be coprime in \eqref{eq:irr1}, \eqref{eq:irr2}, 
\eqref{eq:irr3}, nor in \eqref{eq:irr4}. Adding this assumption would not change the values of 
$\mu (\xi)$ and $\tmu (\xi)$, but would change the values 
of the uniform exponents at some $p$-adic numbers $\xi$. 
As in the real case, it is not difficult to show that $\hmu (\xi) = 2$ for every irrational 
$p$-adic number $\xi$ (this follows from \cite[Satz 2]{Mah38}, 
see also Lemma~\ref{padicle} below). 
This implies that every irrational $p$-adic number $\xi$ satisfies
\begin{equation}  \label{eq:ineq2}
2 = \hmu (\xi) \le \htmu (\xi) \le 2 \hmu (\xi) = 4. 
\end{equation}   
Furthermore, the example of the $p$-adic numbers $\xi_c$ defined above 
shows that the exponent $\htmu$ takes values 
exceeding $2$; see Theorem \ref{1dspectrum2}. 
Thus, unlike $\hmu$, this exponent is not trivial and deserves to be studied more closely. 
Among several results, stated in Section 3, we prove that $\htmu$ 
is bounded from above by $(5 + \sqrt{5})/2$, thereby improving 
the upper bound $4$ given by \eqref{eq:ineq2}.  Thus, unlike \eqref{eq:ineq}, the 
inequalities \eqref{eq:ineq2} are not best possible. 

Throughout this paper, the numerical constants implied by $\ll$ are always positive and 
depend at most on the prime number $p$.  
Furthermore, the symbol $\asymp$ means that both inequalities $\ll$ and $\gg$ hold.

\section{On the spectra of $\mu^{\astmult}, \hmu^{\astmult}$, and $\mu^{\astmult} / \mu$}

We begin with explicit examples of lacunary Hensel expansions, which include the $p$-adic 
numbers $\xi_c$ defined in Section 1. 

\begin{theorem}  \label{1dspectrum2}
	Let $(a_k)_{k \geq 0}$ be an increasing sequence of non-negative integers with $a_0 = 0$ and 
	$a_{k+1} \ge 2 a_k$ for every sufficiently large integer $k$. Define
	$$
	\xi=\sum_{k=0}^{\infty} p^{a_{k}} = 1 + p^{a_1} + p^{a_2} + \cdots . 
	$$
	Set
	\[
	c=\liminf_{k \to\infty} \frac{a_{k+1}}{a_{k}}, \quad  d = \limsup_{k \to\infty}\frac{a_{k+1}}{a_{k}}, 
	\]
	where $c, d$ are in $[2, + \infty]$. 
	Then, we have
	\begin{equation}  \label{eq:haupta2}
	\mu(\xi)=  d, \quad \mu^{\astmult}(\xi)= 2 d, 
	\end{equation}
	and
	\begin{equation}  \label{eq:2esti}
	3-\frac{1}{c}\; \leq\; \hmu^{\astmult}(\xi) \; \leq \; 3+\frac{1}{d-1}.
	\end{equation}
\end{theorem}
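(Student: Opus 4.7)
The strategy is to show that the optimal integer pairs approximating $\xi$ are essentially the shifted Hensel truncations $(p^j \xi_n, p^j)$, where $\xi_n = \sum_{k=0}^{n} p^{a_k}$. The lacunarity assumption $a_{k+1} \ge 2 a_k$ forces $\xi_n \asymp p^{a_n}$ and $|\xi - \xi_n|_p = p^{-a_{n+1}}$, so this family satisfies
\[
|y \xi - x|_p = p^{-j - a_{n+1}}, \quad |y| = p^j, \quad |x| \asymp p^{j + a_n}, \quad |xy|^{1/2} \asymp p^{j + a_n/2}.
\]
Setting $j = 0$ and restricting to a subsequence along which $a_{n+1}/a_n \to d$ immediately yields the lower bounds $\mu(\xi) \ge d$ and $\tmu(\xi) \ge 2 d$ in \eqref{eq:haupta2}.

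For the matching upper bounds I would invoke the $p$-adic analogue of Legendre's lemma: for integer pairs $(x_i, y_i)$ with $|y_i \xi - x_i|_p \le p^{-N}$, the integer $x_1 y_2 - x_2 y_1$ is divisible by $p^N$, so either vanishes or has absolute value at least $p^N$. Taking $(x_1, y_1) = (\xi_n, 1)$ as benchmark and exploiting that the gap $a_{n+1} - a_n \ge a_n$ is large, any second pair $(x, y)$ with $|y\xi - x|_p \le p^{-a_{n+1}}$ and sufficiently small size must satisfy $x/y = \xi_n$. The factor $|k|_p^{-1}$ gained by writing $(x, y) = k(\xi_n, 1)$ is exactly compensated by the growth of $|x| \asymp |k| p^{a_n}$, so the best additive exponent obtainable is $a_{n+1}/a_n$ and the best multiplicative one is $2 a_{n+1}/a_n$; taking $\limsup$ gives $\mu(\xi) \le d$ and $\tmu(\xi) \le 2 d$.

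The lower bound in \eqref{eq:2esti} follows by producing, for every sufficiently large $X = p^Y$, an explicit valid approximation. Let $n$ be the unique index with $a_n/2 \le Y < a_{n+1}/2$ and set $j = \lfloor Y - a_n/2 \rfloor$. Then $(p^j \xi_n, p^j)$ satisfies $|xy|^{1/2} \ll X$ and $|y\xi - x|_p \ll X^{-\theta}$ with $\theta = 1 + (a_{n+1} - a_n/2)/Y$. Since $Y < a_{n+1}/2$, the exponent satisfies $\theta > 3 - a_n/a_{n+1}$, and the definition $c = \liminf a_{n+1}/a_n$ eventually forces $a_{n+1}/a_n \ge c - o(1)$, giving $\htmu(\xi) \ge 3 - 1/c$.

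The upper bound $\htmu(\xi) \le 3 + 1/(d-1)$ is the most delicate step and the principal obstacle. The plan is to select bad test scales $X_n = p^{Y_n}$ along a subsequence where $a_{n+1}/a_n \to d$, with $Y_n = (d-1) a_n / 2$. A direct optimisation shows that at $Y_n$ the exponent attainable from the family $(p^j \xi_m, p^j)$ is maximised by $m = n$, $j = (d-2) a_n / 2$, giving the value $3 + 1/(d-1) + o(1)$; any other index $m$ either violates the size constraint (for $m > n$) or gives a strictly smaller exponent (for $m < n$). The hard part is excluding exotic approximations outside this family: iterating the Legendre-type separation against each benchmark $(\xi_m, 1)$ with $m \le n$, any competitor $(x, y)$ of size $|xy|^{1/2} \le X_n$ with error strictly better than $X_n^{-(3 + 1/(d-1)) - \eps}$ is forced to have $x/y = \xi_m$ for one such $m$, placing $(x, y)$ back in the family already analysed. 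The combinatorial bookkeeping needed to handle all scales between $p^{a_n/2}$ and $p^{a_{n+1}/2}$ uniformly is where the real work lies.
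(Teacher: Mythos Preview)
Your treatment of the lower bounds $\mu(\xi)\ge d$, $\tmu(\xi)\ge 2d$, and $\htmu(\xi)\ge 3-1/c$ via the shifted truncations $(p^j\xi_n,p^j)$ is exactly what the paper does.

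Where you diverge is on all three \emph{upper} bounds, and in each case the paper takes a shortcut through an external result rather than arguing directly. For $\mu(\xi)\le d$ the paper simply cites \cite{BuPe18}; for $\tmu(\xi)\le 2d$ it then invokes the general inequality $\tmu\le 2\mu$ from \eqref{eq:ineq}. Most importantly, the upper bound $\htmu(\xi)\le 3+1/(d-1)$ is not proved by any direct analysis of approximants to this particular $\xi$: it is read off from the general estimate \eqref{eq:andere} of Theorem~\ref{endlich}, namely $\htmu(\xi)\le 3+2/(\tmu(\xi)-2)$, after plugging in $\tmu(\xi)=2d$. That inequality is established later in the paper by a short argument using Lemma~\ref{padicle} and holds for \emph{every} irrational $p$-adic number, so no lacunary structure is needed at that step.

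Your direct route---choosing a bad test scale $X_n\asymp p^{(d-1)a_n/2}$ and then excluding ``exotic'' competitors by iterated Legendre-type separations---is in principle viable, but you correctly flag the bookkeeping as the real obstacle, and you have not actually carried it out. The paper's detour through \eqref{eq:andere} bypasses this entirely; what you would gain from a successful direct argument is a self-contained proof for this class of $\xi$, at the cost of considerably more work that the paper has no need to do.
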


The left hand equality of \eqref{eq:haupta2} has been established in \cite{BuPe18}, the best rational   
approximations being given by the integers $\sum_{j=0}^J p^{a_{j}}$, with $J \ge 1$, obtained 
by truncation of the Hensel expansion of $\xi$. In view of the definition of $\tmu$ and of 
\eqref{eq:ineq}, this implies the right hand equality of \eqref{eq:haupta2}. 
The left hand inequality of \eqref{eq:2esti} is proved in Section 5, while the 
right hand inequality is derived from \eqref{eq:andere} below.  
For small values of $d$, Theorem \ref{endlich} below 
slightly sharpens the right hand inequality of \eqref{eq:2esti}. 
We believe that the left hand inequality in \eqref{eq:2esti} is actually an equality. 

Recall that a $p$-adic Liouville 
	number is, by definition, an irrational 
	 $p$-adic number whose irrationality exponent is infinite. 	
	 The case where $c$ and $d$ are infinite yields the following statement. 

\begin{corollary} \label{cor:ex}
	The $p$-adic Liouville number
	\[
	\xi_{\infty}:= \sum_{j=1}^{\infty} p^{j!}
	\]
	satisfies $\hmu^{\astmult}(\xi_{\infty})=3$. Consequently, 
	the spectrum of $\hmu^{\astmult}$ contains $3$.
\end{corollary}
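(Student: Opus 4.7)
The plan is to apply Theorem~\ref{1dspectrum2} directly to $\xi_\infty$ with the sequence of Hensel exponents $a_j = j!$ for $j \geq 1$. To respect the convention $a_0 = 0$ of the theorem literally, I would replace $\xi_\infty$ by $1 + \xi_\infty$; since the two numbers differ by the integer $1$, they share all of their classical and uniform irrationality exponents. Thus I set $a_0 = 0$ and $a_k = k!$ for $k \geq 1$. The lacunarity hypothesis $a_{k+1} \geq 2 a_k$ is satisfied for all $k \geq 2$, since $a_{k+1}/a_k = (k+1)!/k! = k+1 \geq 3$.

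Next I would compute the two limit quantities appearing in the theorem. Because $a_{k+1}/a_k = k+1 \to \infty$, both the liminf and the limsup equal $+\infty$, so $c = d = +\infty$. Substituting into~\eqref{eq:2esti} and using the natural convention $1/\infty = 0$, the left hand inequality becomes
\[
\hmu^{\astmult}(\xi_\infty) \;\geq\; 3 - \frac{1}{c} \;=\; 3,
\]
while the right hand inequality becomes
\[
\hmu^{\astmult}(\xi_\infty) \;\leq\; 3 + \frac{1}{d-1} \;=\; 3.
\]
Combining the two estimates forces $\hmu^{\astmult}(\xi_\infty) = 3$.

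For the consequence about the spectrum, I would observe that $\xi_\infty$ is genuinely irrational: by the left hand equality of~\eqref{eq:haupta2} applied to the same sequence, $\mu(\xi_\infty) = d = +\infty$, so $\xi_\infty$ is even a $p$-adic Liouville number. Hence $\xi_\infty$ is an admissible test point in the definition of the spectrum of $\hmu^{\astmult}$, and the value $3$ belongs to that spectrum.

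There is essentially no obstacle in this proof: all the substantive work is carried out by Theorem~\ref{1dspectrum2}, and the corollary is a pure evaluation of the formulas at the factorial sequence. The only minor point is the interpretation of the bounds when $c = d = \infty$, which is handled uniformly by the convention $1/\infty = 0$; this is consistent with the estimates for finite $c,d$, since $3 - 1/c$ and $3 + 1/(d-1)$ both tend to $3$ as $c,d \to \infty$.
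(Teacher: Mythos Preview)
Your proof is correct and takes the same approach as the paper, which derives the corollary as the direct specialization of Theorem~\ref{1dspectrum2} to the case $c=d=\infty$. The only point worth a remark is that invariance of the \emph{multiplicative} exponents under integer translation is less immediate than for $\mu$ and $\hat\mu$ (the shear $(x,y)\mapsto(x+y,y)$ does not preserve the region $|xy|\le X^2$); a cleaner route is simply to observe that the normalization $a_0=0$ plays no role in the proof of Theorem~\ref{1dspectrum2}, so the theorem applies verbatim to $\xi_\infty$.
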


Inequalities \eqref{eq:ineq} motivate the study of the joint spectrum of the exponents $\mu$ and $\tmu$ and 
of the spectrum of their quotient $\tmu / \mu$, which, by \eqref{eq:ineq}, is included in the interval $[1, 2]$. 

\begin{theorem}  \label{theo}
	For any pair of real numbers $(\mu,\tmu)$ satisfying
	 \begin{equation}  \label{range}
	 \tmu > 5+\sqrt{17}, \quad  \frac{\tmu}{2}   \leq \mu\leq \tmu,
	 \end{equation}
	there exists a $p$-adic number $\xi$ 
	such that $\mu^{\astmult} (\xi)=\tmu$ and
	$\mu(\xi)=\mu$. Consequently, the spectrum of the quotient $\mu^{\astmult} / \mu$ is
	equal to the whole interval $[1,2]$.
\end{theorem}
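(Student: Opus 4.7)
The plan is to extend the construction underlying Theorem~\ref{1dspectrum2}, which only produces the extreme ratio $\tmu/\mu = 2$, by prescribing rational convergents $\xi_k = x_k/y_k$ whose numerator and denominator have a controlled asymmetry. For $(\mu, \tmu)$ in the range \eqref{range}, set $\beta = 2 - 2\mu/\tmu \in [0,1]$ (so $\beta = 1$ recovers the balanced situation of Theorem~\ref{1dspectrum2}, while $\beta = 0$ corresponds to $|x_k| \asymp |y_k|$). Choose an increasing integer sequence $(b_k)_{k \ge 0}$ with $b_{k+1}/b_k \to \tmu$, and target the sizes
\[
|x_k| \asymp p^{b_k}, \quad |y_k| \asymp p^{(1-\beta)b_k}, \quad \gcd(y_k, p) = 1, \quad |\xi_{k+1} - \xi_k|_p \asymp p^{-\mu b_k}.
\]
Define $\xi := \lim_{k \to \infty} \xi_k \in \Z_p$.

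For the inductive step, given $(x_k, y_k)$, I would pick any $y_{k+1}$ coprime to $p$ with $|y_{k+1}| \asymp p^{(1-\beta) b_{k+1}}$, and then select $x_{k+1}$ in the residue class of $x_k y_{k+1} y_k^{-1}$ modulo $p^{\lfloor \mu b_k \rfloor}$ with $|x_{k+1}| \asymp p^{b_{k+1}}$; this is possible since $\mu b_k \le b_{k+1}$ for large $k$, using $\mu \le \tmu$. The Cauchy sequence $(\xi_k)$ converges to some $\xi \in \Z_p$, which is irrational: if it were $\xi = a/b$ with $\gcd(b,p) = 1$, Liouville's inequality applied to the nonzero integer $y_k a - x_k b$ would give $p^{-\mu b_k} \asymp |y_k\xi - x_k|_p \gg p^{-b_k}$, forcing $\mu \le 1$ and contradicting $\mu > (5+\sqrt{17})/2$. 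From ultrametric telescoping, $|y_k \xi - x_k|_p \asymp p^{-\mu b_k}$; combined with $\max(|x_k|, |y_k|) \asymp p^{b_k}$ and $|x_k y_k|^{1/2} \asymp p^{(\mu/\tmu) b_k}$, this gives the lower bounds $\mu(\xi) \ge \mu$ and $\tmu(\xi) \ge \tmu$.

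The main obstacle is to prove the matching upper bounds $\mu(\xi) \le \mu$ and $\tmu(\xi) \le \tmu$, i.e.\ to rule out unexpectedly good approximations $(x, y)$ not proportional to any $(x_k, y_k)$. The standard transference tool is the identity
\[
x_k y - x y_k = y_k (y \xi - x) - y (y_k \xi - x_k),
\]
which yields $|x_k y - x y_k|_p \le \max(|y\xi - x|_p,\ |y_k\xi - x_k|_p)$ by the ultrametric inequality. When $x_k y \ne x y_k$, the Liouville bound $|x_k y - x y_k|_p \ge |x_k y - x y_k|^{-1} \gg (p^{b_k} Y)^{-1}$, with $Y = \max(|x|, |y|)$, forces a quantitative lower bound on $|y\xi - x|_p$ as a function of $Y$. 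Applying this simultaneously with $(x_{k+1}, y_{k+1})$ and exploiting the asymmetry $|x_k|/|y_k| \asymp p^{\beta b_k}$ yields a system of inequalities which must be solvable uniformly for all $\beta \in [0,1]$ and all $Y$ in the transition range $p^{b_k} \le Y < p^{b_{k+1}}$; the admissible region for the growth rate is precisely $\tmu > 5 + \sqrt{17}$, which is where the specific numerical threshold enters. I expect the bookkeeping in this optimization step to be the delicate part. Finally, the spectrum assertion for $\tmu/\mu$ follows by fixing any $\tmu > 5 + \sqrt{17}$ and letting $\mu$ vary over $[\tmu/2, \tmu]$, sweeping all ratios in $[1,2]$.
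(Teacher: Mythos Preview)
Your construction and the lower bounds $\mu(\xi)\ge\mu$, $\tmu(\xi)\ge\tmu$ are fine, but the upper-bound step has a real gap. The two-term transference you sketch cannot control all approximations in the height range $[p^{b_k},p^{b_{k+1}})$: by Corollary~\ref{korollar}, the next classical best approximation after $(x_k,y_k)$ already has height $\asymp p^{(\mu-1)b_k}$, far below $p^{b_{k+1}}\asymp p^{\tmu b_k}$. Your recursion places no constraint on that intermediate pair --- you pick $y_{k+1}$ freely, with no structural link to $(x_k,y_k)$ --- so nothing prevents it from approximating $\xi$ to very high order. If one carries out your transference inequalities exactly, using $(x_k,y_k)$ and $(x_{k+1},y_{k+1})$ simultaneously, the sharpest conclusion for an interloper $(x,y)$ with $|y\xi-x|_p=Y^{-\theta}$ and $p^{b_k}\le Y<p^{b_{k+1}}$ is $\theta\le 1+\tmu/(\mu-1)$, which equals $\mu$ only on the curve $\tmu=(\mu-1)^2$, not throughout the region \eqref{range}. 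Exploiting the asymmetry of $(x_k,y_k)$ does not help, since you have no information on the asymmetry of the unknown pair $(x,y)$.

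The paper avoids this by a two-stage construction that is quite different from yours. First (Theorem~\ref{hopefully}) it builds an auxiliary $\zeta\in\Z_p$ via Schneider's $p$-adic continued fraction, choosing the partial quotients so that \emph{every} best approximation to $\zeta$ is accounted for: a sparse subsequence of balanced pairs $(x_{j,0},x_{j,1})$ with $|x_{j,0}|\asymp|x_{j,1}|$ realizes the exponent $\tilde\mu=t\mu$, while all remaining integer pairs satisfy $|z_1\zeta-z_0|_p\gg\max\{|z_0|,|z_1|\}^{-2-\epsilon}$. Then $\xi$ is produced from $\zeta$ by zeroing out long blocks of Hensel digits; the resulting integer truncations $N_j$ supply the highly asymmetric approximations (with $y=1$) giving $\tmu(\xi)\ge 2\mu$, while the balanced pairs of $\zeta$ transform into pairs $(y_{j,0},y_{j,1})$ giving $\mu(\xi)\ge t\mu$. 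The upper bounds $\mu(\xi)\le t\mu$ and $\tmu(\xi)\le 2\mu$ are then obtained by showing that any hypothetically better approximation to $\xi$ would pull back, via $z_0=x+yu^{(h)}$, $z_1=y$, to an approximation to $\zeta$ of order exceeding $2+\epsilon$ that is not among the $(x_{j,0},x_{j,1})$, contradicting Theorem~\ref{hopefully}. The threshold $5+\sqrt{17}$ arises from the inequality $(\mu^2-\mu)/(2\mu-1)>2$ needed in this pullback, not from a direct Liouville optimization as you suggest.
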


The restriction $\tmu > 5+\sqrt{17}$ in Theorem \ref{theo} 
comes from the proof and has no reason to be best possible. 
We believe that \eqref{range} can be replaced by the inequalities 
$ \max\{ 2, {\tmu / 2} \} \leq \mu\leq \tmu$.

Let $\dim$ denote the Hausdorff dimension. The $p$-adic analogue 
of the theorem of Jarn\'\i k and Besicovitch \cite{Jar45,Lutz55} asserts that, for every real number $\mu \ge 2$, we have
	\[
	\dim (\{ \xi\in\mathbb{Q}_{p}: \mu (\xi)\geq \mu \}) =
	\dim (\{ \xi\in\mathbb{Q}_{p}: \mu (\xi) = \mu \}) =
	\frac{2}{\mu};
	\]
	see~\cite{BMo} for a more general $p$-adic result.
Combining this result with \eqref{eq:ineq} and an easy 
covering argument, we deduce that 
	\[
	\dim (\{ \xi\in\mathbb{Q}_{p}: \mu^{\astmult}(\xi)\geq \tmu \}) =
	\dim (\{ \xi\in\mathbb{Q}_{p}: \mu^{\astmult}(\xi) = \tmu \}) =
	\frac{2}{\tmu} 
	\]
holds for every real number $\tmu \ge 2$. Consequently, the spectrum of $\mu^{\astmult}$ 
is equal to the whole interval $[2, +\infty]$. It would be interesting to construct explicitly, for any 
real number $\tmu \ge 2$, a $p$-adic number $\xi_\tmu$ satisfying $\tmu (\xi_\tmu) = \tmu$. For $\tmu \ge 4$, 
such examples are given in Theorem \ref{1dspectrum2}. 

\begin{problem}  \label{pro2.4}
For $\tmu$ any real number with $2 \le \tmu < 4$, construct explicitly a $p$-adic 
number $\xi_\tmu$ such that $\tmu (\xi_\tmu) = \tmu$. 
\end{problem} 

The algorithm presented in Section~\ref{auxres} below may be helpful
for answering Problem~\ref{pro2.4}. However,
if we impose the additional natural condition 
$\mu^{\times}(\xi_\tmu)> \mu(\xi_\tmu)$,
new difficulties occur.

In a subsequent work, we will study more closely the 
classical and uniform multiplicative exponents of $p$-adic numbers 
whose Hensel expansion is given by a classical combinatorial sequence, like the 
Thue--Morse sequence or a Sturmian sequence. Let us just note that the $p$-adic 
Thue--Morse number
$$
\xi_{TM} = 1 + p^3 + p^5 + p^6 + p^9 + p^{10} + \ldots
$$
satisfies $\mu (\xi_{TM}) = 2$ (see \cite{BuYao17}) and $\tmu (\xi_{TM}) \ge 3$, 
where presumably this inequality is in fact an equality.

\section{Upper bounds for the uniform exponent $\hmu^{\astmult}$}

In the main result of this section, we improve the trivial upper bound $4$ given in \eqref{eq:ineq2} 
for the exponent of uniform approximation $\htmu$.

\begin{theorem} \label{endlich}
	Any irrational $p$-adic number $\xi$ satisfies
	\begin{equation}  \label{eq:andere}
	\hmu^{\astmult}(\xi) \leq 3 +\frac{2}{\mu^{\astmult}(\xi)- 2},
	\end{equation}
	\begin{equation}  \label{eq:jnik}
	\mu^{\astmult}(\xi) \geq \hmu^{\astmult}(\xi)^2 - 
	3 \hmu^{\astmult}(\xi) + 3, 
	\end{equation}
	and
		\begin{equation}  \label{eq:bndere}
	\hmu^{\astmult}(\xi) \leq  \frac{5+\sqrt{5}}{2} = 3.6180\ldots 
	\end{equation}
\end{theorem}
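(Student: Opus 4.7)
The plan is to deduce \eqref{eq:bndere} from \eqref{eq:andere} and \eqref{eq:jnik} by a short algebraic calculation, so the real work sits in the first two inequalities. Substituting the lower bound $\tmu(\xi) \ge \htmu(\xi)^2 - 3\htmu(\xi) + 3$ from \eqref{eq:jnik} into the upper bound of \eqref{eq:andere} gives
\[
\htmu(\xi) \le 3 + \frac{2}{\htmu(\xi)^2 - 3\htmu(\xi) + 1},
\]
which clears to $\htmu(\xi)^3 - 6\htmu(\xi)^2 + 10\htmu(\xi) - 5 \le 0$. This cubic factors as $(t-1)(t^2 - 5t + 5)$, whose roots are $1$ and $(5 \pm \sqrt{5})/2$, so since $\htmu(\xi) \ge \hmu(\xi) = 2 > 1$ one concludes $\htmu(\xi) \le (5+\sqrt{5})/2$, which is \eqref{eq:bndere}.

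For both \eqref{eq:andere} and \eqref{eq:jnik} I would work with a sequence $(u_k, v_k)_{k \ge 1}$ of best multiplicative approximations, constructed inductively by taking $(u_{k+1}, v_{k+1})$ to be a pair $\Q$-linearly independent from $(u_k, v_k)$ that satisfies the uniform inequality at the smallest possible threshold $X_{k+1} > X_k := |u_k v_k|^{1/2}$. Set $Y_k := |v_k\xi - u_k|_p^{-1}$ and $\eta_j := v_j\xi - u_j$. The uniform condition, applied just below $X_{k+1}$ where no pair other than scalar multiples of $(u_k, v_k)$ is available, forces $Y_k \gg X_{k+1}^{\htmu(\xi) - \eps}$, while the classical condition gives $Y_k \ll X_k^{\tmu(\xi) + \eps}$ for all large $k$. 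The cross-determinant $\Delta_k := u_k v_{k+1} - u_{k+1} v_k = v_k \eta_{k+1} - v_{k+1}\eta_k$ is a nonzero integer with $|\Delta_k|_p \le Y_k^{-1}$, and archimedeanly $|\Delta_k| \le 2 H_k H_{k+1}$ with $H_j := \max(|u_j|, |v_j|) \le X_j^2$; the product formula $|\Delta_k| \cdot |\Delta_k|_p \ge 1$ then gives $Y_k \le 2 H_k H_{k+1}$. Running this argument along the sequence and carefully tracking the shape parameter $H_j/X_j \in [1, X_j]$, which records how balanced the coordinates are, should produce the product inequality $(\htmu(\xi) - 3)(\tmu(\xi) - 2) \le 2$ of \eqref{eq:andere}.

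The proof of \eqref{eq:jnik}, a Jarn\'\i k-type inequality, is the main technical obstacle. A two-pair determinant alone yields the weaker bound $\tmu(\xi) \ge 2\htmu(\xi)/(\htmu(\xi) - 2)$, strictly smaller than the desired quadratic lower bound as soon as $\htmu(\xi) > 3$. To upgrade, I would bring in a third consecutive pair $(u_{k+2}, v_{k+2})$: since three vectors in $\Z^2$ are $\Q$-linearly dependent, one can write $(u_{k+2}, v_{k+2}) = a(u_k, v_k) + b(u_{k+1}, v_{k+1})$ with $a, b \in \Q$ whose denominators divide $\Delta_k$, which produces the identity $\eta_{k+2} = a \eta_k + b \eta_{k+1}$. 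The $p$-adic ultrametric then forces a tight alignment of the valuations of $a$ and $b$ for the right-hand side to reach $Y_{k+2}^{-1}$, and feeding this alignment back into archimedean size bounds on $(u_{k+2}, v_{k+2})$ should supply the extra factor of $\htmu$ needed to pass from the linear to the quadratic dependence $\htmu(\xi)^2 - 3\htmu(\xi) + 3$. Extracting the sharp constants $-3$ and $+3$ will require a careful case analysis according to which of $|a|_p, |b|_p$ dominates and how the shape parameters of the three successive pairs are distributed.
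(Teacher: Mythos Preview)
Your deduction of \eqref{eq:bndere} from \eqref{eq:andere} and \eqref{eq:jnik} is correct and identical to the paper's. Your outline for \eqref{eq:andere} is also essentially the paper's argument: pick a near-extremal approximation $(x,y)$, introduce the shape parameter $A\in[1,2]$ via $\max\{|x|,|y|\}=(|xy|^{1/2})^{A}$, choose a threshold $X=(|xy|^{1/2})^{(\mu-A)/2-\varepsilon}$, and bound separately the multiples of $(x,y)$ and the linearly independent pairs via the determinant lemma (Lemma~\ref{padicle}); optimising over $A$ gives $3+2/(\tmu(\xi)-2)$.

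For \eqref{eq:jnik}, however, your proposal diverges from the paper and carries a real gap. The paper does \emph{not} use a three-term linear relation among $(u_k,v_k),(u_{k+1},v_{k+1}),(u_{k+2},v_{k+2})$. Instead it first proves a separate structural result (Theorem~\ref{neu}): if two \emph{consecutive} multiplicative best approximations have the same orientation (both with $|x^\times|\gg|y^\times|$, or both the reverse), then already $\htmu(\xi)\le 3$. Hence, whenever $\htmu(\xi)>3$, the orientations must strictly alternate, so the pairs at indices $k$ and $k+2$ share the same orientation. The determinant argument is then run between indices $k$ and $k+2$, where the matched shapes make $|x_{k+2}^\times y_k^\times - x_k^\times y_{k+2}^\times|$ genuinely controllable in terms of $Q_k^\times, Q_{k+2}^\times$ and the shape parameter $\delta_k$. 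Combining this with the two-step ratio bound $\log Q_{k+2}^\times/\log Q_k^\times \le \bigl((\mu_k-1)/(\htmu-1)\bigr)^2$ from Lemma~\ref{hilfl}, together with the shape-refined version of the \eqref{eq:andere} argument (Corollary~\ref{KOR} with $A=2-\overline{\delta}$), yields the quadratic bound after a case analysis on $\overline{\delta}$.

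The gap in your three-term approach is the passage from $p$-adic to archimedean information on the coefficients $a,b$. In Moshchevitin's real argument the smallness of $|\eta_{k+2}|$ forces $|a\eta_k|$ and $|b\eta_{k+1}|$ to nearly cancel \emph{archimedeanly}, which directly constrains $|a|,|b|$ and hence $|u_{k+2}|,|v_{k+2}|$. In the $p$-adic setting the ultrametric only gives you $|a|_p,|b|_p$ (or their equality), and the product formula on the integers $\Delta_k a,\Delta_k b$ does not cleanly separate the contribution of $\mathrm{ord}_p(a)$ from that of $\mathrm{ord}_p(\Delta_k)$; you cannot convert ``$|a|_p$ is small'' into a useful archimedean bound on $(u_{k+2},v_{k+2})$ without further input. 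This is exactly the shape-mismatch obstruction: when $(u_k,v_k)$ and $(u_{k+1},v_{k+1})$ have opposite orientations, $|\Delta_k|$ can be as large as $H_kH_{k+1}\asymp X_k^2X_{k+1}^2$, swallowing any gain. The paper's detour through Theorem~\ref{neu} is precisely what neutralises this, and it is the missing idea in your sketch.
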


The first assertion of Theorem~\ref{endlich} is stronger than the third one
only when $\mu^{\astmult}(\xi)$ exceeds $3 +\sqrt{5} = 5.23\ldots$.

The combination of 
\eqref{eq:andere} and \eqref{eq:jnik} gives
$$
\hmu^{\astmult}(\xi) \leq 3 +\frac{2}{\hmu^{\astmult}(\xi)^2 - 3 \hmu^{\astmult}(\xi) + 1},
$$
thus
$$
(\hmu^{\astmult}(\xi)  - 1) \bigl( \hmu^{\astmult}(\xi)^2 - 5 \hmu^{\astmult}(\xi) + 5 \bigr) \le 0, 
$$
and we obtain \eqref{eq:bndere}. Therefore, to establish Theorem \ref{endlich}, it is 
sufficient to prove \eqref{eq:andere} and \eqref{eq:jnik}.

Note that \eqref{eq:jnik} is of interest only for
putative $\xi$ with $\hmu^{\astmult}(\xi)> 3$. 
Combined with \eqref{eq:ineq} it implies 
that 
	$$  
	\mu(\xi)\geq \frac{\hmu^{\astmult}(\xi)^2 - 3 \hmu^{\astmult}(\xi) + 3}{2}.
	$$ 
In particular,  if $\hmu^{\astmult}(\xi)>(3+\sqrt{13})/2=3.3027\ldots$ then $\mu(\xi)>2$,
	thus, $\xi$ is very well approximable. In other words, if
	$\mu(\xi)=2$ then $\hmu^{\astmult}(\xi)\leq (3+\sqrt{13})/2$.

We display an immediate consequence of \eqref{eq:andere}. 

\begin{corollary} \label{cor:Liouv}
	Any $p$-adic Liouville number $\xi$ satisfies
	$
	\hmu^{\astmult}(\xi) \leq 3.
	$
\end{corollary}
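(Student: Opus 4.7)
The plan is to derive the corollary directly from the first inequality of Theorem~\ref{endlich}, namely
$$
\hmu^{\astmult}(\xi) \leq 3 +\frac{2}{\mu^{\astmult}(\xi)- 2},
$$
which is valid for every irrational $p$-adic number $\xi$.

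First I would invoke the left-hand inequality of \eqref{eq:ineq}, which gives $\mu^{\astmult}(\xi) \geq \mu(\xi)$. By the definition recalled just before Corollary~\ref{cor:ex}, a $p$-adic Liouville number is by definition an irrational $p$-adic number with $\mu(\xi) = +\infty$. Consequently $\mu^{\astmult}(\xi) = +\infty$ as well.

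Substituting this into \eqref{eq:andere}, the fraction $2/(\mu^{\astmult}(\xi)-2)$ vanishes (formally, by taking the limit as $\mu^{\astmult}(\xi) \to +\infty$, which is legitimate since the supremum in the definition of $\htmu$ makes $\htmu(\xi) \le 3 + 2/(m-2)$ hold for every finite $m < \mu^{\astmult}(\xi)$ with $m > 2$, and one lets $m \to +\infty$). This yields $\hmu^{\astmult}(\xi) \leq 3$, as desired.

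There is essentially no obstacle here: the entire content is packaged into \eqref{eq:andere}, and the corollary is the limiting case $\mu^{\astmult}(\xi) = +\infty$. The only minor care needed is to phrase the passage to the limit rigorously, i.e. to apply \eqref{eq:andere} to arbitrarily large finite lower bounds on $\mu^{\astmult}(\xi)$ rather than directly substituting $+\infty$.
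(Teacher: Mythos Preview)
Your proposal is correct and matches the paper's approach exactly: the paper states that the corollary is an immediate consequence of \eqref{eq:andere}, and your argument spells out precisely that deduction, including the passage from $\mu(\xi)=+\infty$ to $\mu^{\astmult}(\xi)=+\infty$ via \eqref{eq:ineq}.
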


In view of Corollary \ref{cor:ex}, the upper bound $3$ for Liouville 
numbers obtained in Corollary \ref{cor:Liouv} is best possible. 
We cannot exclude that $\htmu$ is always bounded by $3$. 

For the proof of Theorem~\ref{endlich}, we introduce the sequence
$(x_{k}^{\astmult}, y_{k}^{\astmult})_{k \ge 1}$ of multiplicative
best approximations to $\xi$, defined in Section~\ref{se8}.
We are able to get the stronger conclusion $\hmu^{\astmult}(\xi) \leq 3$ 
under certain conditions.

\begin{theorem}  \label{neu}
Assume that at least one of the following two claims holds

\smallskip

(i) There exist $c > 0$ and are arbitrarily large $k$ such that 
$|x_{k}^{\astmult}|\ge c |y_{k}^{\astmult}|$ and $|x_{k+1}^{\astmult}|\ge c |y_{k+1}^{\astmult}|$; 

(ii) There exist $c > 0$ and are arbitrarily large $k$ such that 
$|x_{k}^{\astmult}|\le c |y_{k}^{\astmult}|$ and $|x_{k+1}^{\astmult}|\le c |y_{k+1}^{\astmult}|$. 
\smallskip

Then we have $\hmu^{\astmult}(\xi) \leq 3$.
\end{theorem}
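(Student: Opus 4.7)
My plan is to exploit the determinant of two consecutive multiplicative best approximations, combined with the $p$-adic/archimedean size comparison $|n| \ge |n|_{p}^{-1}$ for nonzero integers. Writing $H_k = |x_k^{\astmult} y_k^{\astmult}|^{1/2}$ and $\psi_k = |y_k^{\astmult}\xi - x_k^{\astmult}|_p$, the best-approximation property of $(x_k^{\astmult}, y_k^{\astmult})$ on the interval $H_k \le X < H_{k+1}$ gives
$$
\hmu^{\astmult}(\xi) \le \liminf_{k\to\infty} \frac{\log \psi_k^{-1}}{\log H_{k+1}},
$$
so it suffices to show $\psi_k^{-1} \ll H_{k+1}^3$ for the arbitrarily large $k$ provided by hypothesis (i) or (ii).

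The key tool will be the nonzero integer $\Delta_k := x_{k+1}^{\astmult} y_k^{\astmult} - x_k^{\astmult} y_{k+1}^{\astmult}$ (assuming $\Q$-linear independence of consecutive best approximations; otherwise I pass to a subsequence of linearly independent pairs). Rewriting
$$
\Delta_k = y_k^{\astmult}\bigl(x_{k+1}^{\astmult} - y_{k+1}^{\astmult}\xi\bigr) - y_{k+1}^{\astmult}\bigl(x_k^{\astmult} - y_k^{\astmult}\xi\bigr),
$$
the ultrametric inequality on the $p$-adic side gives $|\Delta_k|_p \le \max(\psi_k, \psi_{k+1}) = \psi_k$, whence $|\Delta_k| \ge \psi_k^{-1}$ since $\Delta_k$ is a nonzero integer. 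Combined with the archimedean triangle inequality,
$$
\psi_k^{-1} \le |x_{k+1}^{\astmult}||y_k^{\astmult}| + |x_k^{\astmult}||y_{k+1}^{\astmult}|.
$$

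Under hypothesis (i), the inequality $|x_k^{\astmult}| \ge c|y_k^{\astmult}|$ gives $|y_k^{\astmult}|^2 \le c^{-1}H_k^2$, hence $|y_k^{\astmult}| \le c^{-1/2} H_k$, and $|x_k^{\astmult}| = H_k^2/|y_k^{\astmult}| \le H_k^2$ (using $|y_k^{\astmult}| \ge 1$); analogous bounds hold at $k+1$. Plugging in and using the monotonicity $H_k \le H_{k+1}$ yields $\psi_k^{-1} \ll H_k H_{k+1}^2 \le H_{k+1}^3$, as required. Hypothesis (ii) is handled by the symmetry $\hmu^{\astmult}(\xi) = \hmu^{\astmult}(1/\xi)$, which exchanges the roles of $x$ and $y$ in the best approximations and so swaps conditions (i) and (ii); alternatively one repeats the same inequalities with the roles of $x$ and $y$ interchanged throughout.

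The main obstacle I anticipate is ensuring nonvanishing of $\Delta_k$, i.e.\ $\Q$-linear independence of consecutive multiplicative best approximations. I expect this to be built into the definition of the sequence in Section~\ref{se8} (via some primitivity convention); failing that, I would thin to a subsequence of linearly independent pairs, which must still contain infinitely many indices satisfying (i) or (ii), so the conclusion is unaffected.
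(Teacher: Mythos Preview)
Your determinant argument is sound and, once the error below is fixed, actually a bit more direct than the paper's case analysis. But the displayed inequality
\[
\hmu^{\astmult}(\xi)\ \le\ \liminf_{k\to\infty}\frac{\log\psi_k^{-1}}{\log H_{k+1}}
\]
is false. The point is that the definition of $\hmu^{\astmult}$ does \emph{not} impose coprimality (the paper stresses this just after Definition~\ref{def1}). On the range $H_k\le X<H_{k+1}$ the best \emph{coprime} approximation is $\psi_k$, but by taking $M=p^m$ with $p^mH_k\le X$ one gets the non-coprime pair $(Mx_k^{\astmult},My_k^{\astmult})$ with $|My_k^{\astmult}\xi-Mx_k^{\astmult}|_p=p^{-m}\psi_k\asymp (H_k/X)\psi_k$. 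This is exactly what pushes the uniform exponent above your formula. Concretely, for $\xi_\infty=\sum_{j\ge1}p^{j!}$ the multiplicative best approximations are the truncations $N_j$ with $H_j\asymp p^{j!/2}$ and $\psi_j\asymp p^{-(j+1)!}$, so $\log\psi_j^{-1}/\log H_{j+1}\to 2$; your inequality would force $\hmu^{\astmult}(\xi_\infty)\le 2$, whereas the paper proves $\hmu^{\astmult}(\xi_\infty)=3$.

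The correct relation (stated in Section~\ref{se8} and used in the proof of Lemma~\ref{hilfl}) is
\[
\hmu^{\astmult}(\xi)\ =\ 1+\liminf_{k\to\infty}\frac{\log\psi_k^{-1}-\log H_k}{\log H_{k+1}},
\]
and for your purpose only the inequality $\hmu^{\astmult}(\xi)-1-\varepsilon\le (\log\psi_k^{-1}-\log H_k)/\log H_{k+1}$ for all large $k$ is needed. Now your estimate $\psi_k^{-1}\ll H_kH_{k+1}^2$ (which you derive correctly under (i), and symmetrically under (ii)) plugs in to give, for the infinitely many relevant $k$,
\[
\frac{\log\psi_k^{-1}-\log H_k}{\log H_{k+1}}\ \le\ \frac{2\log H_{k+1}+O(1)}{\log H_{k+1}}\ \longrightarrow\ 2,
\]
so the liminf is $\le 2$ and $\hmu^{\astmult}(\xi)\le 3$. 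Thus you should \emph{not} weaken $\psi_k^{-1}\ll H_kH_{k+1}^2$ to $\psi_k^{-1}\ll H_{k+1}^3$; the sharper form is exactly what is required once the correct formula is used. Your concern about $\Delta_k\ne 0$ is harmless: consecutive best approximations are coprime primitive pairs with $\psi_{k+1}<\psi_k$, hence linearly independent. Compared with the paper, which splits into two cases according to which of $|x_{k+1}^{\astmult}y_k^{\astmult}|,\ |x_k^{\astmult}y_{k+1}^{\astmult}|$ dominates and then invokes \eqref{eq:mutmu}, your direct bound on $|\Delta_k|$ is shorter; the paper's route, on the other hand, isolates the growth ratio $\log H_{k+1}/\log H_k$, which it reuses in the proof of \eqref{eq:jnik}.
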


The upper bound $\frac{5+\sqrt{5}}{2}$ in Theorem \ref{endlich} 
is obtained when, simultaneously, $|x_{2k}^{\astmult}|$ is very small 
compared to $|y_{2k}^{\astmult}|$ and $|x_{2k + 1}^{\astmult}|$ is very large
compared to $|y_{2k + 1}^{\astmult}|$, 
or vice versa, 
for every sufficiently large integer $k$. 
We cannot exclude the existence of a $p$-adic number whose sequence of multiplicative
best approximations has this property.

The main difference with the classical setting 
occurs when we estimate the $p$-adic value of the difference between 
distinct rational numbers. Let $x, y, x', y'$ be nonzero integers, not divisible by $p$ and such that 
$ x y' \not= x' y$. 
Then, $|x/y - x'/y'|_p^{-1} = | x y' - x' y|_p^{-1}$ is at most equal to $|x y'| + |x' y|$, which 
can be much larger than the product $| x y|^{1/2}$ times $|x' y'|^{1/2}$, in particular when 
simultaneously $|x|$ is much larger than $|y|$ and $|y'|$ is much larger than $|x'|$. Thus, we 
cannot avoid to use the trivial estimate $| x y' - x' y|_p^{-1} \le 2 \max\{|x|, |y|\}  \max\{|x'|, |y'|\}$, 
which involves the sup norm.

It follows from \eqref{eq:jnik} that	
	\[
	\dim (\{\xi\in\mathbb{Q}_{p}: \hmu^{\astmult}(\xi) \geq \tmu \})
	\leq \frac{2}{(\tmu)^2 - 3 \tmu + 3 }, \quad \tmu\in \biggl[3, \frac{5+\sqrt{5}}{2} \biggr].
	\]

Our results motivate the following question.

\begin{problem}
Determine the Hausdorff dimension of the sets
\[
\{ \xi\in\mathbb{Q}_{p}: \hmu^{\astmult}(\xi) \geq \tmu \}, \quad
\{ \xi\in\mathbb{Q}_{p}: \hmu^{\astmult}(\xi) = \tmu \}, \quad  \tmu\in \biggl[2, \frac{5+\sqrt{5}}{2} \biggr]. 
\]
\end{problem}

We end this section with a remark.        
It follows from Theorem \ref{endlich} that any $p$-adic number $\xi$ 
        with $\htmu (\xi) = \frac{5+\sqrt{5}}{2}$ also satisfies
	$
	\mu^{\astmult}(\xi)= 3 +\sqrt{5}.	 
	$
         A similar situation occurs with the extremal numbers defined by Roy \cite{Roy04}. 
         These are transcendental real numbers $\alpha$ 
         whose uniform exponent of quadratic approximation takes the
         maximal possible value, that is, for which we have $\hw_2 (\alpha) = (3 + \sqrt{5})/2$. 
         Roy \cite{Roy04} proved that they satisfy
         \begin{equation} \label{eq:extr} 
	1 + w_2^* (\alpha) = 3 +\sqrt{5}, \quad 
	1 + \hw_2 (\alpha) = \frac{5 +\sqrt{5}}{2},
	\end{equation} 
	where $w_2^*$ and $\hw_2$ denote classical and uniform exponents of quadratic approximation.
	Subsequently, Moshchevitin \cite{Mo14} established that every irrational, non-quadratic 
	real number $\alpha$ satisfies
	$$
	w_2^* (\alpha) \ge \hw_2 (\alpha) (\hw_2 (\alpha) - 1), 
	$$
	\begin{equation} \label{eq:mosh} 
	1 + w_2^* (\alpha) \ge (1 + \hw_2 (\alpha) )^2 - 3 (1 + \hw_2 (\alpha) ) + 3,
	\end{equation} 
	with equality when $\alpha$ satisfies \eqref{eq:extr}. 
	Furthermore, by \cite[Ineq. (2.5)]{buschlei}, we also have
	\begin{equation} \label{eq:bs} 
	1 + \hw_2 (\alpha) \le 3 + \frac{2}{(1 + w_2^* (\alpha) ) - 2},
	\end{equation} 
	with equality when $\alpha$ satisfies \eqref{eq:extr}. 
	Since \eqref{eq:mosh} and \eqref{eq:bs} are analogous to \eqref{eq:jnik} and \eqref{eq:andere}, 
	respectively, this may suggest that the bounds of Theorem~\ref{endlich} are best possible.

\section{Preparatory results}  \label{prepres}

First, we observe that in the definitions of the exponents of approximation $\mu$ and 
$\mu^{\astmult}$, we can assume that the integers $x$ and $y$ are coprime. 
This is not the case for the uniform exponents. 

The next two statements are $p$-adic analogues of classical results 
in the real case, which can be easily proved using the theory of continued fractions. They can most likely be found
in the literature, but we choose to supply short proofs for the 
convenience of the reader.

\begin{lemma}  \label{padicle}
	Let $\xi$ be in $\mathbb{Q}_{p}$. 
	There do not exist two linearly independent integer pairs 
	$(x_{1},y_{1})$ and $(x_{2},y_{2})$, which, setting $X_{i}=\max \{ |x_{i}|,|y_{i}|\}$ for $i=1,2$, 
	satisfy
	\[
	|y_{i}\xi -x_{i}|_{p} < \frac{1}{2}X_{1}^{-1} X_{2}^{-1}, \quad i=1,2.
	\]
	In particular, for any real number $X>1$, the system
	\[
	\max\{ |x|,|y|\}\leq X, \quad |y \xi - x|_{p} < \frac{1}{2}X^{-2}, 
	\]
	does not have two linearly independent integer solutions.
\end{lemma}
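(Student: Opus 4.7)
The plan is a determinant (index-form) argument, the $p$-adic analogue of the classical real proof but using the ultrametric inequality in place of the usual triangle inequality. Assume for contradiction that two linearly independent integer pairs $(x_1,y_1),(x_2,y_2)$ with the stated approximation property exist, and introduce the determinant
\[
D := x_1 y_2 - x_2 y_1 \in \Z,
\]
which is nonzero by linear independence. The whole proof consists in bounding $|D|_p$ from above and below and observing that the two bounds are incompatible.

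For the upper bound, I would use the identity
\[
D \;=\; y_2\,(x_1 - y_1 \xi) \;-\; y_1\,(x_2 - y_2 \xi),
\]
whose point is to express $D$ in terms of the small quantities $y_i\xi - x_i$. Applying the ultrametric inequality in $\Q_p$ and using $|y_i|_p \le 1$ (since $y_i\in\Z$) then gives
\[
|D|_p \;\le\; \max\bigl\{\, |y_1\xi-x_1|_p,\ |y_2\xi-x_2|_p\,\bigr\} \;<\; \tfrac{1}{2}\,X_1^{-1}X_2^{-1}.
\]

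For the lower bound, I would use the standard fact that for a nonzero integer $D$ one has $|D|_p \cdot |D| \ge 1$ (write $D = \pm p^k m$ with $\gcd(m,p)=1$, so $|D|_p = p^{-k}$ and $|D| \ge p^k$). Combined with the obvious estimate $|D| \le |x_1||y_2| + |x_2||y_1| \le 2 X_1 X_2$, this yields
\[
|D|_p \;\ge\; \frac{1}{|D|} \;\ge\; \frac{1}{2\,X_1 X_2},
\]
which contradicts the upper bound obtained above. The second assertion of the lemma is then immediate: if $X_1,X_2 \le X$, then $X_1^{-1} X_2^{-1} \ge X^{-2}$, so the hypothesis $|y_i\xi - x_i|_p < \tfrac{1}{2} X^{-2}$ implies the inequality of the first part and yields the same contradiction.

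There is really no serious obstacle: the only two points to be careful about are (i) using $|y_i|_p \le 1$ rather than the archimedean $|y_i|$ in the ultrametric step, and (ii) the inequality $|D|_p \ge 1/|D|$, which is the $p$-adic replacement for the familiar fact that a nonzero rational integer has archimedean absolute value at least $1$. Both are standard and this is why the $p$-adic statement mirrors its real counterpart so closely.
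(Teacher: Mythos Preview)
Your proposal is correct and follows essentially the same approach as the paper: both introduce the determinant $D = x_1 y_2 - x_2 y_1$, bound $|D|_p$ from above via the ultrametric inequality applied to the identity expressing $D$ in terms of $y_i\xi - x_i$, bound $|D|_p$ from below via $|D|_p \ge 1/|D| \ge 1/(2X_1X_2)$, and derive a contradiction. Your write-up is in fact slightly more explicit about the use of $|y_i|_p \le 1$ and the product-formula inequality $|D|_p \ge 1/|D|$, but the argument is the same.
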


Lemma \ref{padicle} easily implies that every irrational $p$-adic number $\xi$
satisfies $\hmu (\xi) \le 2$, thus $\hmu (\xi) = 2$,
a fact already stated in Section~\ref{intro}. 

\begin{proof}
	Assume, on the contrary, that there are two linearly independent 
	pairs of solutions $(x_1, y_1)$ and $(x_2, y_2)$ as in the statement of
	the lemma. 
	Write  
	$X=\max \{X_{1},X_{2}\}$. 
	It follows from the identity
	\[
	x_{1}y_{2}-x_{2}y_{1}= y_{1}(y_{2} \xi -  x_{2}) - y_{2}(y_1 \xi -  x_{1}) 
	\]
	that
	$$
	|x_{1}y_{2}-x_{2}y_{1}|_{p} \leq \max\{ |y_{2} \xi -  x_{2}|_{p} \; , \; |y_1 \xi -  x_{1}|_{p}  \} 
	< \frac{1}{2X_{1}X_{2}}.
	$$
	Since $x_{1}y_{2} \not= x_{2}y_{1}$, we get
	\[
	|x_{1}y_{2}-x_{2}y_{1}|_{p}\geq \frac{1}{|x_{1}y_{2}-x_{2}y_{1}|}\geq \frac{1}{|x_{1}y_{2}|+|x_{2}y_{1}|}\geq \frac{1}{2X_{1}X_{2}},
	\]
	a contradiction. The next claim follows
	directly since $X_{i}\leq X$ for $i=1, 2$ implies that $X^{-2}\leq X_{1}^{-1}X_{2}^{-1}$.
\end{proof}

For an irrational $p$-adic number $\xi$ and a real number $T\geq 1$, 
let $(x(T),y(T))$ denote the pair of coprime integers 
which minimizes $|y(T)\xi- x(T)|_{p}$ among all
the pairs $(x,y)$ of coprime integers with $\max\{ |x|,|y|\} \leq T$.
As $T$ increases, this gives rise to a sequence of best approximations 
$(x_{j},y_{j})$, $j \ge 1$, to $\xi$ with the properties
\[
\max\{ |x_{1}|,|y_{1}|\}< 
\max\{ |x_{2}|,|y_{2}|\}<\cdots, \quad |y_{1}\xi-x_{1}|_{p}>|y_{2}\xi-x_{2}|_{p}>\cdots.
\]
and $|y_{j}\xi-x_{j}|_{p}$ minimizes $|y \xi-x|_{p}$ among all the pairs $(x,y)$ of coprime integers with
$\max\{ |x|,|y|\} \leq \max\{ |x_{j}|,|y_{j}|\}$.

\begin{corollary}  \label{korollar}
	If $(x_{k},y_{k})$ is a best approximation to $\xi$ in $\mathbb{Q}_{p}$ and $\tau_{k}$ is defined by  
	\[
	|y_{k}\xi - x_{k}|_{p} = \max\{ |x_{k}|,|y_{k}|\}^{-\tau_{k}},
	\]
	then the next best approximation $(x_{k+1},y_{k+1})$ to $\xi$ 
	satisfies
	\[
	\frac{1}{2}\max\{ |x_{k}|,|y_{k}|\}^{\tau_{k}-1} \leq  \max\{ |x_{k+1}|,|y_{k+1}|\}\leq 
	(p+1) \max\{ |x_{k}|,|y_{k}|\}^{\tau_{k}-1}. 
	\]
	Thus, 
	\[
	|y_{k}\xi - x_{k}|_{p} \asymp \max\{ |x_{k}|,|y_{k}|\}^{-1}\max\{ |x_{k+1}|,|y_{k+1}|\}^{-1}.
	\]
\end{corollary}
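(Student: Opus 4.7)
Write $M_j := \max\{|x_j|, |y_j|\}$ for brevity. The plan is to establish the two-sided bound on $M_{k+1}$ in two separate steps, and then to deduce the asymptotic equivalence $|y_k \xi - x_k|_p \asymp M_k^{-1} M_{k+1}^{-1}$ as an immediate consequence.

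For the lower bound $M_{k+1} \ge \frac{1}{2} M_k^{\tau_k - 1}$, the coprime integer pairs $(x_k, y_k)$ and $(x_{k+1}, y_{k+1})$ are distinct (since $M_k < M_{k+1}$) and hence linearly independent over $\mathbb{Z}$. By the defining property of best approximations, $|y_{k+1}\xi - x_{k+1}|_p < |y_k\xi - x_k|_p = M_k^{-\tau_k}$. Applying Lemma~\ref{padicle} with $X_1 = M_k$ and $X_2 = M_{k+1}$, at least one of the two $p$-adic errors must be $\ge \frac{1}{2} M_k^{-1} M_{k+1}^{-1}$; since the larger one is $M_k^{-\tau_k}$, this forces $M_k^{-\tau_k} \ge \frac{1}{2} M_k^{-1} M_{k+1}^{-1}$, which rearranges to the claim.

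For the upper bound $M_{k+1} \le (p+1) M_k^{\tau_k - 1}$, the idea is to invoke the $p$-adic analog of Minkowski's theorem due to Mahler (\cite[Satz~2]{Mah38}), which produces, for every real $X \ge 1$, a nonzero integer pair $(x', y')$ with $\max\{|x'|, |y'|\} \le X$ and $|y'\xi - x'|_p \le X^{-2}$. Applying it with $X$ slightly exceeding $M_k^{\tau_k/2}$ and bounded by $(p+1) M_k^{\tau_k - 1}$---a range that is nonempty because $\tau_k \ge 2$, which itself follows from Mahler at scale $M_k$ combined with the minimality of $|y_k\xi - x_k|_p$---yields a pair with $p$-adic error strictly smaller than $M_k^{-\tau_k}$. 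Reducing $(x', y')$ by its greatest common divisor $d$ gives a coprime pair whose archimedean norm does not increase but whose $p$-adic error may inflate by the factor $p^{v_p(d)}$. The slack constant $p+1$ in the statement is precisely tuned to absorb one such factor of $p$: with a careful choice of parameters, the reduced coprime pair still beats $|y_k\xi - x_k|_p$ in $p$-adic error while having archimedean norm at most $(p+1) M_k^{\tau_k - 1}$, so that, by the defining property of the next best approximation, $M_{k+1}$ is bounded by this norm.

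Combining the two bounds yields $M_k M_{k+1} \asymp M_k^{\tau_k}$, hence $|y_k\xi - x_k|_p = M_k^{-\tau_k} \asymp M_k^{-1} M_{k+1}^{-1}$, which is the final assertion. The principal technical obstacle lies in the upper bound: reconciling the output of the $p$-adic Minkowski theorem---which need not be coprime---with the coprime framework of best approximations requires tight control of the $p$-adic valuation of $d$. The constant $p+1$ is tailored to absorb a single factor of $p$ in $p^{v_p(d)}$; handling potentially larger gcds requires either a refined choice of $X$ or the auxiliary observation that an unusually large common $p$-factor would force the reduced pair's archimedean norm below $M_k$, which would contradict the best-approximation property of $(x_k, y_k)$.
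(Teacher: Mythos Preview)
Your lower-bound argument via Lemma~\ref{padicle} is identical to the paper's. For the upper bound, both you and the paper invoke Mahler's $p$-adic Minkowski theorem, but the paper's execution is more direct than your gcd-reduction approach. The paper sets $Q=(p+1)M_k^{\tau_k-1}$, obtains from Mahler a (not necessarily coprime) pair $(x,y)$ with $\max\{|x|,|y|\}\le Q$ and $|y\xi-x|_p\le pQ^{-2}$, and then checks in one line that \emph{every} integer multiple $M(x_k,y_k)$ with $M\cdot M_k\le Q$ satisfies
\[
|My_k\xi-Mx_k|_p\;\ge\;M^{-1}M_k^{-\tau_k}\;\ge\;\frac{M_k}{Q}\,M_k^{-\tau_k}\;>\;pQ^{-2}.
\]
Hence the Mahler pair is not proportional to $(x_k,y_k)$, and $M_{k+1}\le Q$ follows. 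This computation is precisely the ``auxiliary observation'' you gesture at in your last sentence; framing it this way avoids having to track $p^{v_p(d)}$ at all, which is what you spend most of your upper-bound paragraph worrying about. One further caution: your side claim that $\tau_k\ge 2$ follows from applying Mahler at scale $M_k$ and minimality is not airtight as written, since the Mahler solution at that scale need not be coprime; the paper's argument does not invoke $\tau_k\ge 2$ as a separate step.
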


\begin{proof}
	Set $Q_{j}=\max \{ |x_{j}|,|y_{j}|\}$ for $j \geq 1$.
	Assume that $Q_{k+1}< C Q_{k}^{\tau_{k}-1}$ for some $C<\frac{1}{2}$.
	Since any two best approximations are linearly independent, the inequalities
	\[
	|y_{k+1} \xi - x_{k+1}|_{p}< |y_{k}\xi - x_{k}|_{p}=Q_{k}^{-\tau_{k}}<CQ_{k}^{-1}Q_{k+1}^{-1}
	<\frac{1}{2}Q_{k}^{-1}Q_{k+1}^{-1},
	\]
	contradict Lemma~\ref{padicle}. This proves the left hand estimate. 
	
	For the right hand estimate notice that, 
	by \cite[Satz 1]{Mah34a}, for every positive integer $h$, there are integers $x, y$,
	not both zero, such that $|y \xi - x|_p \le p^{-h}$ and $\max\{|x|, |y|\} \le p^{h/2}$. 
	Consequently, for every $Q$ there is a pair $(x,y)$ of integers with $\max\{|x|,|y|\}\leq Q$ 
	and $|y \xi -x|_{p}\leq p Q^{-2}$. 
	
	Set $Q=(p+1) \max\{ |x_{k}|,|y_{k}|\}^{\tau_{k}-1}$. 
	For any positive integer $M$ satisfying $M\cdot\max\{ |x_{k}|,|y_{k}|\} \le Q$ we have
	\[
	|My_{k}\xi -M x_{k}|_{p}\geq M^{-1}|y_{k}\xi -x_{k}|_{p}\geq
	\frac{\max\{ |x_{k}|,|y_{k}|\}}{Q}\cdot \max\{ |x_{k}|,|y_{k}|\}^{-\tau_{k}}> p Q^{-2}.
	\]
	This implies that $\max\{ |x_{k+1}|,|y_{k+1}|\}\leq Q$ and completes the proof.
	\end{proof}

\section{Proof of Theorem~\ref{1dspectrum2}}  \label{seunipara}

\begin{proof}[Proof of Theorem~\ref{1dspectrum2}]

	Let $\xi$ be as in the theorem and define the rational integers
	\[
	Q_{k}= \sum_{j=0}^{k}  p^{a_{j}}, \quad k \ge 1. 
	\]
	Then with $c_{k}= a_{k+1}/ a_{k}$ for $k \ge 1$ we get
	\[
	|\xi-Q_{k}|_{p} = \Bigl|\sum_{j=k+1}^{\infty}  p^{a_{j}} \Bigr|_{p}
	\asymp p^{-a_{k+1}}
	\asymp Q_{k+1}^{-1} \asymp Q_{k}^{-c_{k}}, \quad k \ge 1. 
	\]
		This in particular shows that $\mu(\xi) \geq d$ and $\tmu(\xi) \geq 2 d$, since
		there are arbitrarily large $k$ such that $c_k$  is
		arbitrarily close to $d$. The equality $\mu(\xi) = d$ has been established in~\cite{BuPe18}. 
		By \eqref{eq:ineq}, this gives $\tmu(\xi) \leq 2 d$ and proves \eqref{eq:haupta2}. 
	Set $Q_{k}^{\astmult} = \sqrt{Q_{k}}$ for $k \geq 1$. 
	For a given integer $X$, let
	$k$ be the index defined by $Q_{k}^{\astmult}\leq X< Q_{k+1}^{\astmult}$. 
	We then have
	\[
	Q_{k}^{\astmult}  = \sqrt{Q_{k}\cdot 1} \leq X, \quad |\xi-Q_{k}|_{p} \asymp  (Q_{k}^{\astmult})^{ -2c_{k} }.
	\]
	Let $M$ be the largest integral power of $p$ smaller than $X/Q_{k}^{\astmult}$.
	Then
	\[
	\sqrt{(M\cdot 1) \cdot (M\cdot Q_{k})} \leq X  
	\]
	and
	\[
	|M\xi-MQ_{k}|_{p} \ll M^{-1}(Q_{k}^{\astmult})^{ -2c_{k} }\ll \frac{Q_{k}^{\astmult}}{X}(Q_{k}^{\astmult})^{ -2c_{k} }\ll X^{\frac{1-2c_{k} }{c_{k} } -1}= X^{-3+\frac{1}{c_{k}} } .
	\]
	By definition of $c$, we get the lower bound $\hmu^{\astmult}(\xi)\geq 3-1/c$ in \eqref{eq:2esti}. 
	The upper bound follows from \eqref{eq:andere}.
	\end{proof}

\section{An auxiliary result}  \label{auxres}

The proof of Theorem~\ref{theo} is semi-constructive and uses

\begin{theorem}  \label{hopefully}
	For any $\tilde{\mu}>2$ and any $\epsilon>0$,
	there exists $\xi$ in $\mathbb{Z}_{p}$ with the following properties.
	There exists a sequence 
	$((x_{j,0}, x_{j,1}))_{j \ge 1}$ of pairs of coprime integers not divisible by $p$, whose moduli 
     tend to infinity, 
	and satisfy the following properties:
	\begin{itemize}
		\item 	We have
		\[
		|x_{j,0}|\asymp |x_{j,1}|, \quad  |x_{j,1}\xi - x_{j,0}|_{p} 
		\asymp |x_{j,0}|^{- \tilde{\mu} }\asymp |x_{j,1}|^{- \tilde{\mu} }, \quad j\geq 1,
		\]
		\item We have
		\[
		\lim_{j\to\infty} \frac{\log |x_{j+1,0}|}{\log |x_{j,0}|}= \infty
		\]
		\item For every integer pair $(z_{0},z_{1})$ linearly
		independent of any pair $(x_{j,0},x_{j,1})$ with $j \ge 1$,
		we have
		\begin{equation}  \label{eq:z0z1}
		|z_{1}\xi - z_{0}|_{p} \gg \max\{ |z_{0}|, |z_{1}|\}^{-2-\epsilon}.
		\end{equation}
	\end{itemize} 
\end{theorem}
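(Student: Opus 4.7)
My plan is an inductive $p$-adic construction of $\xi$ as the limit of balanced rationals $x_{j,0}/x_{j,1}$. At stage $j$, I would fix a coprime pair $(x_{j,0}, x_{j,1})$ with both entries prime to $p$ and $|x_{j,0}| \asymp |x_{j,1}| \asymp X_j$, and force $\xi \equiv x_{j,0}/x_{j,1} \pmod{p^{N_j}}$ with $N_j$ chosen so that $p^{-N_j} \asymp X_j^{-\tilde{\mu}}$. Since $|x_{j,1}|_p = 1$, this yields $|x_{j,1}\xi - x_{j,0}|_p \asymp X_j^{-\tilde{\mu}}$, giving the first bullet. Choosing $X_j$ so that $\log X_{j+1}/\log X_j \to \infty$ supplies the second bullet.

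In the inductive step, given the pair at stage $j$ and $\xi$ constrained to the $p$-adic ball $\mathcal{B}_j$ of radius $p^{-N_j}$ around $x_{j,0}/x_{j,1}$, I would look for a coprime balanced pair $(x_{j+1,0}, x_{j+1,1})$, both entries prime to $p$, with moduli $\asymp X_{j+1}$, whose ratio lies in $\mathcal{B}_j$. A lattice-point count in the box $[-X_{j+1}, X_{j+1}]^2$, combined with the congruence constraint $a \equiv b \xi^{(j)} \pmod{p^{N_j}}$, gives $\asymp X_{j+1}^2/X_j^{\tilde{\mu}}$ such candidates; each refines $\xi$ to a sub-ball $\mathcal{B}_{j+1}$ of radius $p^{-N_{j+1}} \asymp X_{j+1}^{-\tilde{\mu}}$.

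For the bound \eqref{eq:z0z1}, consider any coprime $(z_0, z_1)$ with $Z = \max(|z_0|,|z_1|) \leq X_{j+1}$ linearly independent from all $(x_{i,0},x_{i,1})$. Applying Lemma~\ref{padicle} to $(x_{j,0},x_{j,1})$ and $(z_0,z_1)$ gives $|z_1\xi - z_0|_p \gg (X_j Z)^{-1}$ whenever $Z \lesssim X_j^{\tilde{\mu}-1}$; this exceeds $Z^{-2-\epsilon}$ as soon as $Z \geq X_j^{1/(1+\epsilon)}$, and the induction hypothesis covers smaller $Z$. A symmetric application with $(x_{j+1,0},x_{j+1,1})$ handles $Z \geq X_{j+1}^{1/(1+\epsilon)}$. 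Only the gap $X_j^{\tilde{\mu}-1} \lesssim Z \lesssim X_{j+1}^{1/(1+\epsilon)}$ remains. Using the decomposition
\[
z_1\xi - z_0 \;=\; \frac{z_1}{x_{j,1}}(x_{j,1}\xi - x_{j,0}) \;-\; \frac{z_0 x_{j,1} - z_1 x_{j,0}}{x_{j,1}},
\]
the potentially bad pairs in the gap are exactly those whose residue $(z_0 : z_1) \bmod p^{N_j}$ sits very close to the line spanned by $(x_{j,0}:x_{j,1})$; I would show that the fraction of candidate sub-balls $\mathcal{B}_{j+1}$ excluded by any such pair is strictly less than $1$ once $X_{j+1}$ is chosen large enough, so a valid $(x_{j+1,0}, x_{j+1,1})$ exists. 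Because the upper bound $X_{j+1}^{1/(1+\epsilon)}$ of the gap grows faster than any polynomial in $X_j$ but is still strictly less than $X_{j+1}$, there is room to accommodate the super-polynomial growth rate $\log X_{j+1}/\log X_j \to \infty$.

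The main obstacle is this last counting step. A crude total-measure estimate gives $\sum_{Z} Z \cdot Z^{-2-\epsilon} = O_\epsilon(1)$, which exceeds the measure $X_j^{-\tilde{\mu}}$ of $\mathcal{B}_j$ for large $j$, so a pure union bound is inadequate. The argument must instead exploit the discrete algebraic structure of the gap pairs — namely, that they are forced into a thin $p$-adic neighborhood of the line $\mathbb{Z}\cdot(x_{j,0}, x_{j,1})$ and therefore are far sparser than a naive count suggests — together with the rigidity of candidate centers arising from the coprimality and $p$-freeness constraints. Quantifying this sparsity while preserving balance and coprimality of $(x_{j+1,0}, x_{j+1,1})$, and verifying the argument is consistent with super-polynomial growth of $X_{j+1}$, is the principal delicate step of the proof.
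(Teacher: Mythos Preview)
Your plan is genuinely different from the paper's argument and, as you yourself flag, it is incomplete at the decisive point. The paper does \emph{not} attempt any avoidance or measure argument in the gap $[X_j^{\tilde\mu-1},X_{j+1}^{1/(1+\epsilon)}]$. Instead it uses Schneider's $p$-adic continued fraction recursion $p_{n+1}=p_n+b_{n+1}p_{n-1}$, $q_{n+1}=q_n+b_{n+1}q_{n-1}$ with $b_{n+1}=p^{g_{n+1}}$, choosing the exponents $g_{n+1}$ so that the approximation quality $\mu_n$ follows a prescribed sequence. By taking $(\mu_n)$ to be long blocks of $2+\epsilon$ punctuated by rare occurrences of $\tilde\mu$, the paper \emph{fills} your gap with an explicit chain of best approximations of quality exactly $2+\epsilon$; Lemma~\ref{padicle} and Corollary~\ref{korollar} then leave no room for any further good approximation, and the pairs $(x_{j,0},x_{j,1})$ are simply the $(p_n,q_n)$ at the sparse indices where $\mu_n=\tilde\mu$. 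This is both more explicit and sidesteps your ``principal delicate step'' entirely.

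That said, your instinct about sparsity is on the right track and can be made precise. A pair $(z_0,z_1)$ with $\max\asymp Z$ in the gap whose bad ball meets $\mathcal B_j$ must satisfy $p^{N_j}\mid z_0x_{j,1}-z_1x_{j,0}$, i.e.\ lie in a sublattice $\Lambda\subset\mathbb Z^2$ of covolume $\asymp X_j^{\tilde\mu}$ with successive minima $\asymp X_j$ and $\asymp X_j^{\tilde\mu-1}$. For $Z\gg X_j^{\tilde\mu-1}$ the number of such points off the line through $(x_{j,0},x_{j,1})$ is $\asymp Z^2/X_j^{\tilde\mu}$, so the total bad measure is $\ll\sum_{Z\textrm{ dyadic}} (Z^2/X_j^{\tilde\mu})\cdot Z^{-2-\epsilon}\ll X_j^{-\tilde\mu-\epsilon(\tilde\mu-1)}$, a vanishing fraction of $|\mathcal B_j|\asymp X_j^{-\tilde\mu}$. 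This is exactly the quantification your sketch is missing. You would still need to argue that the candidate centers $x_{j+1,0}/x_{j+1,1}$ (balanced, coprime, prime to $p$) are sufficiently well-distributed in $\mathcal B_j$ that at least one lands in the good set; this is plausible but is a second nontrivial step you have not addressed. The paper's continued-fraction construction buys you all of this for free.
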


The first property implies $\mu(\xi) \ge \tilde{\mu}$.
The second property states that there are large gaps between consecutive very good approximations. 
The third states that at most finitely of the other approximations are very 
good, thus (when $\epsilon<\mu-2$) we have $\mu(\xi) = \tilde{\mu}$.
It follows that
$((x_{j,0}, x_{j,1}))_{j \ge 1}$ is a subsequence 
of the sequence of best approximations defined in Section~\ref{prepres},
but may not contain all of them. We remark that \eqref{eq:z0z1}
may be sharpened, indeed, using refined estimates 
the proof actually yields the lower bound 
$\gg_{\varepsilon} Z^{-2} (\log Z)^{-1-\varepsilon}$, 
where $Z=\max\{ |z_{0}|, |z_{1}|\}$ and $\varepsilon>0$ can be taken arbitrarily small.

\begin{proof}[Preparation for the proof of Theorem~\ref{hopefully}]
Fix $\epsilon>0$. 
We first construct a $p$-adic number $\xi$ in such a way that we control 
the quality of its best rational approximations, apart possibly 
some good approximations $(x, y)$, for which $|y \xi - x|\gg \max\{ |x |, |y |\}^{-2-\epsilon}$. 

More precisely, for a given sequence $(\mu_{n})_{n\geq 1}$
with $\mu_{n}\geq 2+\epsilon$ for $n \ge 1$, 
we find $\xi$ as the $p$-adic limit of a sequence
of rationals $p_{n}/q_{n}$ with $p_n\asymp q_n$ 
and upon writing
\[
L_{i}= |p_{i}q_{i+1}-p_{i+1}q_{i}|_{p}, \quad H_i = \max\{|p_i|, |q_i|\}, \quad i \ge 0, 
\]
we have
\begin{equation}  \label{eq:indeed}
H_n^{-\mu_n} \leq L_n \leq p H_n^{-\mu_n}, \quad n\geq 1,
\end{equation}
and \eqref{eq:z0z1} holds
for any $(z_0,z_1)$ linearly independent of all $(p_{n},q_{n})$.

We construct a sequence $p_{n}/q_{n}$ that will converge at some
given rate with respect to the $p$-metric to some $p$-adic number $\xi$. 
We use Schneider's continued fraction algorithm; see e.g. \cite{Bu10}. 
Start with 
\[
p_{-1}=1,\; q_{-1}=0, \quad p_{0}=0, \; q_{0}=1.
\]
Then $|p_{-1}q_{0}-q_{-1}p_{0}|=1$.
Then recursively let
\begin{equation}  \label{eq:rekursion}
p_{n+1}= p_{n}+b_{n+1}p_{n-1}, \quad q_{n+1}=q_{n}+b_{n+1}q_{n-1}
\end{equation}
where each $b_{n}=p^{g_{n}}$ is an integer 
power of $p$ to be chosen at any step accordingly.
For all $n$, since $|b_{n}|_{p}=b_{n}^{-1}$ we
calculate
\begin{align*}
|p_{n}q_{n+1}-p_{n+1}q_{n}|_{p} &= |p_{n}(q_{n}+b_{n+1}q_{n-1})-q_{n}(p_{n}+b_{n+1}p_{n-1})|_{p} 
 \\
&=|b_{n+1}(p_{n}q_{n-1}-p_{n-1}q_{n})|_{p}= 
\frac{1}{b_{n+1}}\cdot |p_{n-1}q_{n}-p_{n}q_{n-1}|_{p}.  
\end{align*}
Setting $L_{i}= |p_{i}q_{i+1}-p_{i+1}q_{i}|_{p}$ and $H_i = \max\{|p_i|, |q_i|\}$ for $i\geq 0$ as above, 
we see that
\begin{equation}  \label{eq:Landb}
L_{n}= \frac{1}{b_{n+1}}\cdot L_{n-1}, \quad n \ge 1. 
\end{equation}
It is easy to see that all $p_{i},q_{i}$ are not divisible by $p$, hence
\[
 \Bigl|\frac{p_{n+1}}{q_{n+1}}-\frac{p_{n}}{q_{n}} \Bigr|_{p}=|p_{n}q_{n+1}-p_{n+1}q_{n}|_{p}
= \frac{1}{b_{n+1}}\cdot|p_{n-1}q_{n}-p_{n}q_{n-1}|_{p}. 
\]
Since $b_{n}\geq p>1$, the rational numbers
$p_{n}/q_{n}$ form a Cauchy sequence and thus
converge with respect to the $p$-adic metric to some $p$-adic number $\xi$. 
Observe that
\[
|q_{n}\xi-p_{n}|_{p}= \Bigl|\xi-\frac{p_{n}}{q_{n}} \Bigr|_{p} 
= \Bigl|\frac{p_{n+1}}{q_{n+1}}-\frac{p_{n}}{q_{n}}\Bigr|_{p}
=|p_{n+1}q_{n}-p_{n}q_{n+1}|_{p}, 
\quad n\geq 1, 
\]
where the second identity holds because obviously 
\[
\Bigl|\xi-\frac{p_{n+1}}{q_{n+1}} \Bigr|_{p} < \Bigl|\xi-\frac{p_{n}}{q_{n}} \Bigr|_{p}. 
\]

We get from \eqref{eq:rekursion} for all $n$ that 
\begin{equation}  \label{eq:maximum}
H_{n+1} \leq H_n + b_{n+1}H_{n-1} \leq 2 \max\{ H_{n}, b_{n+1}H_{n-1}\}.
\end{equation}

Assume now that for some fixed integer $N$ we have 
constructed $p_{1}/q_{1}, \ldots,p_{N}/q_{N}$
with the desired approximation properties.
We describe how to choose
$b_{N+1}$ (or $g_{N+1}$) to get the next $p_{N+1}/q_{N+1}$.
Set $\gamma_N= L_{N-1}H_{N}H_{N-1}$ and observe that the inequality
\begin{equation} \label{eq:annahme}
L_{N-1}\leq \gamma_N \cdot H_{N}^{-1}H_{N-1}^{-1}.
\end{equation}
holds. Now,  define recursively
\[
g_{n+1} = \left\lfloor  \frac{ \log H_n^{\mu_n} L_{n-1} }{\log p}  \right\rfloor,
\quad n\geq N,
\]
which is the largest integer with $b_{n+1}=p^{g_{n+1}}\leq H_n^{\mu_n} L_{n-1}$.
We readily conclude from \eqref{eq:Landb}
that then indeed \eqref{eq:indeed} holds for 
all $n\geq 1$.

By an easy induction, it follows from \eqref{eq:rekursion} 
that $p_k\asymp q_k$ for $k\geq 1$. Moreover it is clear from
the recursion \eqref{eq:rekursion} that $p_n$ and $q_n$ are 
coprime for all $n$.

It remains to be shown that there is no good approximation
apart from the $p_n/q_n$, i.e., that \eqref{eq:z0z1} holds. For 
this we first estimate the growth of the height sequence $(H_{n})_{n\geq 1}$.
For the initial value $n=N$, in case the maximum in \eqref{eq:maximum} is $b_{n+1}H_{n-1}=b_{N+1} H_{N-1}$, 
by \eqref{eq:Landb} and \eqref{eq:indeed} we have
\begin{align} \label{eq:nextstep}
H_{N+1} \leq  2b_{N+1}H_{N-1}= 2\frac{L_{N-1}}{L_N}H_{N-1} &\leq 2L_{N-1} H_N^{\mu_N} H_{N-1}
\nonumber \\& = 2H_N^{\mu_N}(L_{N-1}H_{N-1})\leq 
2\gamma_N H_{N}^{\mu_{N}-1}, 
\end{align}
where we used our induction assumption \eqref{eq:annahme}.
Then by \eqref{eq:nextstep} in view of \eqref{eq:indeed} for $n$
(which we verified above) we have
\[
L_{N}\leq p H_{N}^{-\mu_{N}}\leq 2p\gamma_{N}\cdot  H_{N+1}^{-1}H_{N}^{-1}.
\]
In other words, in the next step similar to \eqref{eq:annahme} we have
\[
L_{N}\leq \gamma_{N+1} H_{N+1}^{-1}H_{N}^{-1}, \quad 
\gamma_{N+1}=2p \gamma_{N}.
\]
Thus similarly as in \eqref{eq:nextstep} above we infer
\[
H_{N+2} \leq 2\gamma_{N+1} H_{N+1}^{\mu_{N+1}-1}.
\]
Iterating this process we see that for all $n\geq N$ 
and some fixed $c>1$ we get
\[
L_{n}\leq  \gamma_N(2p)^n\cdot  H_{n+1}^{-1}H_{n}^{-1}\ll c^{n}\cdot  H_{n+1}^{-1}H_{n}^{-1}
\]
and
\begin{equation}  \label{eq:H}
H_{n} \leq \gamma_N(4p)^n\cdot H_{n-1}^{\mu_{n-1}-1}\ll c^{n}\cdot H_{n-1}^{\mu_{n-1}-1}.
\end{equation}

Otherwise, if the maximum in \eqref{eq:maximum} is $H_{N+1}$, 
then we directly get 
\[
H_{N+1}\leq  2H_{N}\leq 2H_{N}^{\mu_{N}-1},
\]
since $\mu_{n}\geq 2+\epsilon>2$, which is even stronger 
than the estimates derived in the other case and we infer the same result.

Notice that by Corollary~\ref{korollar} we have 
$H_{n+1}\geq H_n^{ \min\{ \mu_n , 2+\epsilon \} -1 }/2 = H_n^{1+\epsilon}/2$
for all large $n$. Thus the sequence $(\log H_{n})_{n \ge 2}$ grows  exponentially fast, 
so in particular 
$$ 
c^{k}= H_{k}^{o(1)}, \quad k\to\infty .
$$ 
It then follows from \eqref{eq:H} that 
\begin{equation} \label{eq:epsin}
L_{k}\ll_{\varepsilon} H_{k+1}^{-1+\varepsilon}H_{k}^{-1+\varepsilon}, 
\quad H_{k}\ll_{\varepsilon} H_{k-1}^{\mu_{k-1}-1+\varepsilon}, \quad k \ge 2,
\end{equation}
for every $\varepsilon>0$
and some implicit positive constants depending only
on $\varepsilon$.

Now assume that an integer pair $(z_0, z_1)$ is not among the $(p_n,q_n)$ and satisfies 
\begin{equation} \label{eq:viola}
| z_1 \xi - z_0| \leq Z^{-2-\epsilon/2},
\end{equation}
where $Z=\max\{ |z_0|, |z_1| \}$. 
We may assume that $(z_0, z_1)$ is
linearly independent to all $(p_n,q_n)$ 
and that $z_0$ and $z_1$ are coprime. 
Let $k$ be the index 
with $H_{k-1}\leq Z< H_{k}$. By coprimality and 
since 
$$
|q_n\xi-p_n|\leq \max\{ p_n, q_n\}^{-2-\epsilon} < p_{n}^{-1}q_{n}^{-1}/2, 
$$
Lemma~\ref{padicle} implies that 
the pairs $(p_n,q_n)$ 
are best approximations. Similarly, by ${\rm gcd} (z_0,z_1)=1$ and \eqref{eq:viola} 
similarly the pair $(z_0,z_1)$ is a 
best approximation as well.
Thus, on the one hand, by Corollary~\ref{korollar} we have
\[
Z\geq \frac{H_{k-1}^{\mu_{k-1}-1}}{2}, \quad k\geq k_0,
\]
combined with \eqref{eq:epsin} and $Z<H_k$ we get
\[
\frac{\log H_{k}}{\log Z} \leq 1+\eta ,
\]
for arbitrarily small $\eta >0$ and large enough $k$.
On the other hand, again by \eqref{eq:viola} and 
Corollary~\ref{korollar} we must have 
\[
H_{k}\geq \frac{Z^{1+\epsilon/2}}{2}.
\]
By choosing $\eta=\epsilon/3$, we end up with a contradiction for large $k$. 
Thus, \eqref{eq:viola}  cannot hold if $Z$ is large enough. 
\end{proof}

\begin{proof}[Completion of the proof of Theorem~\ref{hopefully}]
	We choose for $(\mu_{n})_{n \ge 1}$ the sequence 
	\[
	2+\epsilon,\tilde{\mu},2+\epsilon,2+\epsilon,\ldots,2+\epsilon,\tilde{\mu},2+\epsilon,2+\epsilon,\ldots , 
	\]
	 with 
	very long blocks of $2+\epsilon$ separating 
	two occurrences of $\tilde{\mu}$. We identify 
	$x_{j,1}=q_{\sigma(j)}$ and $x_{j,0}=p_{\sigma(j)}$ for all $j\geq 1$
	where the injective map $\sigma: \mathbb{N} \to \mathbb{N}$ 
	is defined so that $\sigma(j)$
	is the $j$-th index where $\mu_n=\tilde{\mu}$. 
	The property
	${\rm gcd} (p, x_{j,0}  x_{j,1}) = 1 $ holds since we noticed  
	that ${\rm gcd} (p, p_{i} q_i) = 1$ for all $i$.   
	Moreover, the large gaps guarantee the second claim of the theorem.
	It then follows from the observations above that ${\rm gcd}(x_{j,0}, x_{j,1})=1$ 
	and $x_{j,0}\asymp x_{j,1}$ for all $j$,
	and the estimate $|x_{j,1} \xi- x_{j,0}|_p \asymp x_{j,0}^{-\tilde{\mu}}$ is 
	immediate from \eqref{eq:indeed}. 
	For the remaining $p_n/q_n$ with $n$ not in the image of $\sigma$, 
	we have $\mu_n=2+\epsilon$, and the estimate
	\eqref{eq:z0z1} is implied by \eqref{eq:indeed} again. For all other pairs 
	$(z_0,z_1)$ we have already shown that
	\eqref{eq:viola} does not hold if $\max\{|z_0|, |z_1|\}$ is large enough. 
	This completes the proof. 
		\end{proof}

\section{Proof of Theorem~\ref{theo}}

We prove our Theorem~\ref{theo} using a similar strategy as in~\cite[Theorem~3.7]{schleinew}.
The idea is to start with a $p$-adic number $\zeta$ given by Theorem~\ref{hopefully} and to 
change its Hensel expansion by replacing its digits 
by $0$ in certain large intervals $J_{i}$ in order to obtain a $p$-adic number $\xi$ with the 
requested properties. 
This will induce good
{\it integer} approximations to $\xi$ 
and thereby imply that $\mu^{\astmult}(\xi)$ is rather large. 
We will see that the good approximations $x_{j,0}/x_{j,1}$ 
to $\zeta$ give rise to equally good rational
approximations $y_{j,0}/y_{j,1}$ to $\xi$, thereby showing
$\mu(\xi)\geq \mu$ as well.
The most technical part
is to show that there are no better rational approximations, that is, to verify
the upper bounds 
$\mu(\xi)\leq \mu, \mu^{\astmult}(\xi)\leq \mu^{\astmult}$. 
Here we essentially
use the method invented in~\cite[Theorem~3.7]{schleinew} to show
that putative good approximations to $\xi$ would induce good approximations
to $\zeta$ which are not among the $x_{j,0}/x_{j,1}$, in contradiction 
with Theorem~\ref{hopefully}. In the proof below,
all $\varepsilon_j$ are positive and can be taken arbitrarily close to $0$. 

\begin{proof}[Proof of Theorem~\ref{theo}]
	Fix $t$ in $[1, 2]$ and $\mu>2$. 
	Let $\zeta$ be in $\mathbb{Z}_{p}$ which satisfies the hypotheses of 
	Theorem~\ref{hopefully} for small enough $\epsilon>0$ 
	depending on $\mu$ (this will be made more precise later) and with 
	\[
	\mu(\zeta) = \tilde{\mu} = t \mu.
	\]
	Let $(x_{j,0}, x_{j,1})_{j \ge 1}$ denote the sequence of integer pairs given by Theorem~\ref{hopefully}. 
	Without loss of generality, we
	assume $x_{j,0}>0$ for $j \ge 1$ and that $x_{1,0}$ and $|x_{1,1}|$ are large.  
	Let the Hensel expansion of $\zeta$ be
	\[
	\zeta= \sum_{i=0}^{\infty} a_{i}p^{i},
	\quad a_{i}\in \{0,1,\ldots,p-1\}. 
	\]
	For $j \ge 1$, set $\sigma_{j}= \lfloor\log x_{j,0}/\log p\rfloor$ so that 
	$$
	x_{j,0}\asymp |x_{j,1}| \asymp p^{\sigma_{j}}. 
	$$
	Then the second claim of Theorem~\ref{hopefully} implies	that $\sigma_{j+1}/\sigma_{j}$ tends to infinity
	with $j$. Partition the integers greater than or equal to $\sigma_1$ 
	into the intervals $I_{j}:=[\sigma_{j}, \sigma_{j+1})\cap \mathbb{Z}$.
	 		
	We construct $\xi$ with the desired properties
	by manipulating the Hensel expansion of $\zeta$. First,
	we derive from the sequence $(\sigma_{j})_{j\geq 1}$ two 
	other positive integer sequences $(\tau_{j})_{j\geq 1},
	(\nu_{j})_{j\geq 1}$ defined by 
		\begin{equation}  \label{eq:ss1}
	\nu_{j}=\lfloor t \mu \sigma_{j}\rfloor+C, \quad \tau_{j}=\lfloor \mu \nu_{j}\rfloor, 
	\end{equation}
	for some large positive integer constant $C$. 
		For $x_{1,0}$ and $|x_{1,1}|$ sufficiently large, we have 
	\[
	\sigma_{1}<\nu_{1}<\tau_{1}<\sigma_{2}<\nu_{2}<\tau_{2}<\cdots,
	\]
	and since the quotient $\sigma_{j+1} / \sigma_j$ tends to infinity with $j$ we also have
	\begin{equation}  \label{eq:ss2}
	\lim_{j\to\infty} \frac{\sigma_{j+1}}{\tau_{j}}= \infty. 
	\end{equation}
	Let
	\[
	J_{j}=\{\nu_{j},\nu_{j}+1,\ldots,\tau_{j}\}=[\nu_{j}, \mu  \nu_{j}]\cap \mathbb{Z},
	\quad j\geq 1,
	\]
	so that $J_{j}\subseteq I_{j}$, for $j$ sufficiently large. 
	Consider the 
	$p$-adic number
	\[
	\xi = \sum_{i=0}^{\infty} b_{i}p^{i},
	\quad b_{i}\in \{0,1,\ldots,p-1\},
	\]
	derived from $\zeta$ by setting 
	\[
	b_i= \begin{cases} 0, \quad i\in \cup_{j} (J_{j}\setminus \{ \nu_j , \tau_{j}\}), \\
	               1, \quad i\in \cup_{j} \{ \nu_j , \tau_{j}\}, \\
	           a_{i}, \quad i \notin \cup_{j} J_{j}.         \end{cases} 
	\]
	In other words the Hensel expansions of $\xi$ and $\zeta$ coincide
	outside the intervals $J_{j}$, whereas the digits of $\xi$ are all zero 
	inside $J_{j}$, except at the first and last position of any $J_{j}$, where
	for technical reasons we put the digit $1$.
		We will show that
	\begin{equation}  \label{eq:total}
	\mu^{\astmult}(\xi)= 2\mu, \quad \mu(\xi)=\tilde{\mu}= t \mu, 
	\end{equation}
	if $\mu$ is sufficiently large. This will prove the theorem as $t$ is arbitrary in $[1, 2]$.

	We start with the easiest of the four inequalities, namely
	\begin{equation}  \label{eq:abs1}
	\mu^{\astmult}(\xi)\geq 2\mu. 
	\end{equation}
	Define the integers
	\begin{equation}  \label{eq:njdef}
	N_{j}=   \sum_{i=0}^{\nu_{j}}
	b_{i}p^{i}, \quad j\geq 1.
	\end{equation}
	Clearly $N_j \ll p^{\nu_j}$. Morever, as $\xi$ has
	zero digits at places ranging from
	$\nu_j+1$ to $\tau_j-1\approx \mu \nu_j$, the integers $N_j$ approximate 
	$\xi$ at the order roughly $\mu$, hence
	\begin{equation}  \label{eq:01}
	|\xi-N_{j}|_{p}\ll p^{-\tau_{j}} \ll p^{-\nu_{j} \mu }\ll N_{j}^{-\mu}
	=(\sqrt{1\cdot N_{j}})^{-2\mu}.
	\end{equation}
    We directly deduce \eqref{eq:abs1} from \eqref{eq:01}. We remark that
    $|\xi-N_{j}|_{p}\geq p^{-\tau_{j}}$ since $b_{\tau_{j}}=1$,
    and furthermore, since $b_{\nu_j}=1$, we get in fact
    $N_j \asymp p^{\nu_j}$.
    So we can refine \eqref{eq:01} as
    \begin{equation}  \label{eq:follow}
    |\xi-N_{j}|_{p} \asymp p^{-\nu_j \mu}\asymp N_j^{-\mu}.
    \end{equation}

	Next we show that 
	\begin{equation} \label{eq:h}
	\mu(\xi)\geq t \mu.
	\end{equation}
	By \eqref{eq:follow} the pairs $(x_{0},x_{1})=(N_{j}, 1)$
	only induce approximations of quality $\mu$.
	By manipulating the pairs
	$(x_{j,0}, x_{j,1})$ associated to $\zeta$,
	we construct better approximating sequences $(y_{j,0})_{j\geq 1}, (y_{j,1})_{j\geq 1}$
	such that,
	 for any given $\varepsilon_1>0$ and sufficiently large $j\geq j_{0}(\varepsilon_1)$, we have 
	\begin{equation} \label{eq:thus1}
	|y_{j,1}\xi-y_{j,0}|_{p} \ll \max\{ |y_{j,0}|, |y_{j,1}| \}^{-t \mu+\varepsilon_1}. 
	\end{equation}
	This obviously implies \eqref{eq:h}.
	To construct suitable $y_{j,0},y_{j,1}$, 
	recall that 
	$|x_{j,0}|\asymp |x_{j,1}|\asymp p^{\sigma_{j}}$ 
	and $\sigma_{j}<\nu_{j}<\tau_{j}<\sigma_{j+1}$ for $j \ge 1$, 
	with 
	$$
	\lim_{j \to \infty} \frac{\nu_{j}}{\sigma_{j}} = t \mu, \quad 
	\lim_{j \to \infty} \frac{\sigma_{j+1}}{\tau_{j}} = + \infty, \quad
	\lim_{j \to \infty} \frac{\tau_{j}}{\nu_{j}} = \mu.
	$$
	For $i\geq 1$ define
	$$ 
	u_{i}=  \sum_{j\in J_{i} } a_{j}p^{j} -p^{\nu_{i}} -p^{\tau_{i}}, \qquad u^{(i)}=u_{1}+u_{2}+\cdots+u_{i}.
	$$ 
	Notice that by construction $\zeta-\xi$ is the infinite sum $u_1 + u_2 + \ldots $.

	 Moreover, assuming that $\epsilon < (\mu - 2)/2$, we note that
	\begin{equation}  \label{eq:uinorm}
	 p^{\tau_i(\frac{1}{2} - \epsilon)}\ll |u^{(i)}|\ll p^{\tau_i}, 
	 \qquad\qquad i\geq 1.
	\end{equation}
	The right estimate is obvious. If the left one is not satisfied, then
	$a_j=0$ for $\lfloor\eta_i\rfloor\leq j\leq \tau_i-1$, 
	where $\eta_i:= (\frac{1}{2}-\epsilon)\tau_i$ and $a_{\tau_{i}}=1$. But then
	the integer $M_i= \sum_{j\leq \lfloor \eta_i\rfloor } a_jp^{j}$ satisfies
	\[
	| \zeta- M_i|_p \ll p^{-\tau_i}\ll M_i^{- \tau_i / \eta_i }\ll
	M_i^{- 1 / (\frac{1}{2}-\epsilon) }= M_{i}^{-2-\epsilon- \frac{3\epsilon+2\epsilon^2}{1-2\epsilon} }, 
	\]
	a contradiction with Theorem~\ref{hopefully} for large $i$.

	We claim that if we set
	\begin{equation}  \label{eq:hh}
	y_{j,0}= x_{j,0}- u^{(j-1)}x_{j,1}, \quad
	y_{j,1}= x_{j,1}, 
	\end{equation}
	then indeed \eqref{eq:thus1} holds. 
	We rearrange
	\begin{align}  \label{eq:z1}
	|y_{j,1}\xi-y_{j,0}|_{p}&=
	|x_{j,1}(\xi+u^{(j-1)})-x_{j,0}|_{p}
	\\&=|x_{j,1}(\xi+u^{(j-1)}-\zeta)+(x_{j,1}\zeta-x_{j,0})|_{p}. \nonumber\\
	&\leq \max\{ |x_{j,1}(\xi+u^{(j-1)}-\zeta)|_p, |x_{j,1}\zeta-x_{j,0}|_{p}  \}.  \nonumber
	\end{align}
	By assumption the latter term satisfies
	\begin{equation}  \label{eq:latters}
	|x_{j,1}\zeta-x_{j,0}|_{p} \ll x_{j,0}^{-t \mu}.
	\end{equation}
	To estimate the former expression, note that by construction
	the Hensel expansions of $\xi+u^{(j-1)}$
	and $\zeta$ coincide up to digit $\nu_{j}-1$ (last digit
	before the interval $J_{j}$ starts). 
	Thus, we have 
	\begin{equation}  \label{eq:z2}
	|x_{j,1}(\xi+u^{(j-1)}-\zeta)|_{p}\leq |\xi+u^{(j-1)}-\zeta|_{p} \ll p^{-\nu_{j}}\ll x_{j,0}^{-\nu_{j}/\sigma_{j}}\ll x_{j,0}^{-t \mu},
	\end{equation}
	where the last estimate follows from \eqref{eq:ss1}.
    By combining 
	\eqref{eq:z1}, \eqref{eq:latters}, and \eqref{eq:z2}, we derive
	\begin{equation}  \label{eq:fr}
	|y_{j,1}\xi-y_{j,0}|_{p} \ll x_{j,0}^{-t \mu}\ll |x_{j,1}|^{-t \mu }= |y_{j,1}|^{-t \mu}.
	\end{equation}
	Now for given $\varepsilon_2>0$ and large $j\geq j_{0}(\varepsilon_2)$,  
	we get from \eqref{eq:ss2} the estimate
	\[
	|u^{(j-1)}|\ll p^{\tau_{j-1}}< p^{\varepsilon_2 \sigma_{j}}\ll x_{j,0}^{\varepsilon_2}.
	\]
	Combined with \eqref{eq:fr} and recalling that $x_{j,0}\asymp |x_{j,1}|$ we infer
	\begin{equation}  \label{eq:west}
	|y_{j,0}|=|x_{j,0}-u^{(j-1)}x_{j,1}| \ll x_{j,0}+|u^{(j-1)}|\cdot |x_{j,1}|\ll
	|x_{j,1}|^{1+\varepsilon_2}=|y_{j,1}|^{1+\varepsilon_2},
	\end{equation}
	hence we derive \eqref{eq:thus1} from \eqref{eq:fr}, 
	and consequently \eqref{eq:h} follows. At this point we notice that
	the reverse inequality $|y_{j,0}|\gg |y_{j,1}|$
	follows similarly via
	\begin{equation}  \label{eq:jordan}
	|y_{j,0}|=|x_{j,0}-u^{(j-1)}x_{j,1}|\gg |x_{j,1}|\cdot |u^{(j-1)}|\geq |x_{j,1}|=|y_{j,1}|,
	\end{equation}
	where we use that $x_{j,0}\asymp |x_{j,1}|$ and the fact 
	that $|u^{(j)}|$ tends to infinity with $j$ by \eqref{eq:uinorm}.
	So we keep in mind for the sequel that
	all the integers $x_{j,0}, |x_{j,1}|, |y_{j,0}|, |y_{j,1}|$ are
	of comparable size, in the sense that, for every 
	$\delta > 0$ and for every sufficiently large $j$, we have
	$$
	\max\{ x_{j,0}, |x_{j,1}|, |y_{j,0}|, |y_{j,1}| \} \le (\min \{ x_{j,0}, |x_{j,1}|, |y_{j,0}|, |y_{j,1}| \} )^{1 + \delta}. 
	$$
	
	Next we show the reverse estimate
	\begin{equation}  \label{eq:abs3}
	\mu(\xi)\leq t \mu.
	\end{equation}
	Assume otherwise that there are integers $x,y$ with $\max\{ |x|,|y|\}$
	arbitrarily large 
	and $\theta> t \mu$ such that 
	\begin{equation}  \label{eq:contradict}
	|y \xi-x|_{p} \le \max\{ |x|,|y|\}^{-\theta}.
	\end{equation} 
	We may assume that $x$ and $y$ are coprime, by the comments in Section~\ref{intro}.
	We distinguish two cases. 
	
	\underline{Case 1}: The pair $(x,y)$ is among the pairs $(y_{j,0}, y_{j,1})$
	defined in \eqref{eq:hh} above. We show the reverse
	estimate to \eqref{eq:thus1}, that is, 
	\begin{equation}  \label{eq:sueden}
	|y_{j,1}\xi-y_{j,0}|_{p} \gg \max\{ |y_{j,0}|, |y_{j,1}| \}^{-t \mu}, \quad j\geq 1.
	\end{equation}
	This clearly contradicts \eqref{eq:contradict} for these pairs. 
	By assumption the reverse estimate to \eqref{eq:latters}
	holds as well, i.e.
	\[
	|x_{j,1}\zeta-x_{j,0}|_{p} \gg x_{j,0}^{-t \mu}.
	\]
	Recall that  for $a,b$ in $\mathbb{Q}_{p}$ if $|a|_{p}\neq |b|_{p}$ then $|a+b|_{p}=\max\{ |a|_{p},|b|_{p}\}$.
	Now by \eqref{eq:ss1} the upper bound in \eqref{eq:z2} 
	with the parameter $C$ taken large enough
	will be strictly smaller than this value $x_{j,0}^{-t \mu}$, 
	hence applying the above argument to 
	\[
	a= x_{j,1}\zeta-x_{j,0} , \quad b= x_{j,1}(\xi+u^{(j-1)}-\zeta)
	\]
	allows us, in view of \eqref{eq:z1}, to derive
	\[
	|y_{j,1}\xi-y_{j,0}|_{p} \gg x_{j,0}^{-t \mu} \gg |x_{j,1}|^{-t \mu}=|y_{j,1}|^{-t \mu}\geq \max\{ |y_{j,0}|, |y_{j,1}| \}^{-t \mu},
	\]
	our desired lower bound \eqref{eq:sueden}. 
	
	\underline{Case 2}: The pair $(x,y)$ is not among 
	the $(y_{j,0}, y_{j,1})$. 
	Write
	\[
	H= \max\{ |x|, |y|\}.
	\]
	In fact we show that then
	\begin{equation}  \label{eq:osten}
	|y \xi- x|_{p} \gg H^{-\mu -\varepsilon_3}.
	\end{equation}
	Since $\mu\leq t \mu$ this clearly implies \eqref{eq:abs3}. Note that the bound is optimal
	as by \eqref{eq:follow} it is attained 
	with $\varepsilon_3=0$ by $(x,y)=(N_j, 1)$. However, by the same
	argument, we can exclude these pairs in 
	our investigation.   
	For other pairs, 
	we verify \eqref{eq:osten} indirectly by showing 
	that any pair $(x,y)$ that violates
	the inequality induces a reasonably good 
	rational approximation to $\zeta$ which is not
	among the $x_{j,0}/x_{j,1}$, contradicting the third claim 
	of Theorem~\ref{hopefully}. So, assume that for some $(x,y)$ as above
	we have
	\begin{equation}  \label{eq:westen}
	|y \xi- x|_{p} \leq H^{-\mu -\varepsilon_3}.
	\end{equation}
	Below \eqref{eq:jordan} 
	we noticed that $|y_{j,1}|=|x_{j,1}|$ and $|y_{j,0}|$ are of 
	comparable size, all being 
	roughly equal to $x_{j,0}\asymp |x_{j,1}|$. In particular, the sequences 
	$(|y_{j,0}|)_{j \ge 1}$ and $(|y_{j,1}|)_{j \ge 1}$ are increasing. 
	For a pair $(x,y)$ satisfying \eqref{eq:westen},
	let $h$ be the index with
	\[
	\max\{ |y_{h,0}|, |y_{h,1}| \}< H \leq \max\{ |y_{h+1,0}|, |y_{h+1,1}| \}.
	\]
	In view of \eqref{eq:thus1} and \eqref{eq:westen},
	we derive from Corollary~\ref{korollar}
	\begin{equation} \label{eq:3exp}
	\max\{ |y_{h+1,0}|, |y_{h+1,1}|\}^{\frac{1}{\mu - 1}+\varepsilon_4}
	\gg H \gg \max\{ |y_{h,0}|, |y_{h,1}|\}^{t \mu - 1 -\varepsilon_4}.
	\end{equation}
	However, the right hand estimate 
	is not sufficient. We show the stronger lower bound 
	\begin{equation}  \label{eq:weshow}
	H \gg \max\{ |y_{h,0}|, |y_{h,1}|\}^{t \mu (\mu - 1)}, 
	\end{equation}
	again by application of Lemma~\ref{padicle}. For simplicity write
    \[
    s= \frac{\log H}{\log x_{h,0}}.
    \]
    Recalling
	that all $x_{j,0}, x_{j,1}, y_{j,0}, y_{j,1}$ are in absolute value roughly
	of the same size,
	we have to show $s\geq t \mu (\mu - 1) -\varepsilon_{5}$ for arbitrarily
	small $\varepsilon_{5}>0$.
	According to \eqref{eq:3exp}, upon increasing $\varepsilon_{4}$
	to take into account the implied constants if necessary, 
	we can assume $s\geq t \mu - 1 -\varepsilon_{6}$ for arbitrarily
	small $\varepsilon_{6}>0$.
	On the one hand, with $N_{j}$ as in \eqref{eq:njdef} 
	in view of \eqref{eq:01} we have
	\[
	\max\{ |y \xi- x|_{p} , |\xi-N_{h}|_{p} \} \ll
	\max\{ H^{-\mu -\varepsilon_3} , p^{-\nu_h \mu } \} 
	\]
	and, since 
	\[
	H= x_{h,0}^{s},\quad p^{\nu_{h}}	\asymp x_{h,0}^{\nu_{h}/\sigma_{h}}\asymp 
	x_{h,0}^{t \mu}, 
	\]
	we get 
	\[
	\max\{ |y \xi- x|_{p} , |\xi-N_{h}|_{p} \} \ll 
	x_{h,0}^{- \mu  \min\{t \mu,s \}  }.
	\]
	On the other hand, as
	${\rm gcd}(x,y)=1$ and ${\rm gcd}(N_j,1)=1$  and
	we have assumed $(x,y)\neq (N_{j},1)$, 
	these pairs are linearly independent. 
	Hence, from Lemma~\ref{padicle} 
	and $N_h\ll p^{\nu_{h}}\ll x_{h,0}^{t \mu}$, we get
	\[
	\max\{ |y \xi- x|_{p} , |\xi-N_{h}|_{p} \} \gg
	H^{-1}N_{h}^{-1}\gg x_{h,0}^{-s- t \mu}.
	\]
	This gives the lower bound
	\begin{equation}  \label{eq:bl}
	s+ t \mu +\varepsilon_{7} \geq \mu \min\{ s, t \mu\}.
	\end{equation}
	If $s \le t \mu$, then 
	we get $s\leq  t \mu/ (\mu - 1) + \varepsilon_{8}$. However, 
	in view of $s\geq t \mu - 1 -\varepsilon_{6}$ noticed above as $\varepsilon_{6}, \varepsilon_{8}$ can be arbitrarily small this
	gives a contradiction 
	as soon as 
	\[
	\mu> 1 + \frac{t \mu}{t \mu - 1} = 2+ \frac{1}{t\mu-1}.
	\]
	Since $t \mu \geq \mu >2$ a sufficient criterion is 
	\begin{equation} \label{eq:str1}
	\mu > 3.
	\end{equation}
	If $s > t \mu$, then  we 
	derive from \eqref{eq:bl} that $s+ t \mu+\varepsilon_{7} \geq t \mu^2$ or
	equivalently $s\geq t \mu ( \mu - 1)-\varepsilon_{7}$. Thus,
	as $\varepsilon_{7}$ can be arbitrarily small,
	we have shown \eqref{eq:weshow}.
	
	Next observe that the triangle inequality gives
	\[
	|x + yu^{(h)}-y\zeta|_{p} = |(x-y\xi)+y(\xi+u^{(h)}-\zeta)|_{p}\leq
	\max\{ |x-y\xi|_{p} , |y(\xi+u^{(h)}-\zeta)|_{p}\},
	\]
	so combined with \eqref{eq:z2} applied for $j=h+1$
	and with \eqref{eq:westen} we conclude
	\begin{equation}  \label{eq:2exp}
	|x+y u^{(h)}- y \zeta|_{p} \ll \max\{\; x_{h+1,0}^{- t \mu}\;,\;  H^{-\mu -\varepsilon_{3} }\; \}.
	\end{equation}
	From \eqref{eq:3exp} and as $|y_{h,0}|\ll |y_{h,1}|^{1+\varepsilon_2}=|x_{h,1}|^{1+\varepsilon_2}$ by \eqref{eq:west}
	and $\mu>2$ and $t\geq 1$,
	we check that 
	\[
	H \ll 
	x_{h+1,0}^{ \frac{1}{\mu - 1} +\varepsilon_{9} } \ll
	x_{h+1,0}^{t},
	\]
	so the right hand expression in \eqref{eq:2exp} is larger. 
	With \eqref{eq:3exp} and \eqref{eq:weshow}, 
	and since $|y_{j,1}|$, $|y_{j,0}|$ and $x_{j,0}$
	are of comparable size and $\tau_{h}/\sigma_{h}$ tends to $(t\mu)\mu= t \mu^2$
	by \eqref{eq:ss1}, we estimate
	\begin{align*} 
	\max\{ |y|, |x+yu^{(h)}|  \} &\leq |x|+|y|\cdot |u^{(h)}|\leq
	H+H \cdot  p^{\tau_{h}+1}  \\ &\ll 
	 H\cdot x_{h,0}^{\tau_{h}/\sigma_{h}}\ll
	H\cdot (H^{\frac{1}{(\mu-1)t\mu}})^{\tau_{h}/\sigma_{h}}
	\ll  H^{2+ \frac{1}{\mu - 1} }.
	\end{align*}
	To sum up, we have shown 
\begin{align}  \label{eq:dotter}
\max\{ |y|, |x+yu^{(h)}|  \} &\ll H^{2+ \frac{1}{\mu - 1} }, \\
|x+yu^{(h)}-y \zeta|_{p} &\leq H^{-\mu -\varepsilon_{3} }. \nonumber
\end{align}
	From \eqref{eq:dotter}, we get
	\begin{align*}
	-\frac{\log |x+ yu^{(h)}-y \zeta|_{p}}{\log \max\{ |y|, |x+yu^{(h)}|  \}}
	& \geq (\mu+\varepsilon_{3})\cdot \frac{\log H}{\log \max\{ |y|, |x+yu^{(h)}|\}}\\
	& \geq (\mu+\varepsilon_{3})\cdot \left(2+\frac{1}{\mu - 1}\right)^{-1} - \varepsilon_{10}
	> \frac{\mu^{2} - \mu}{2\mu - 1} - \varepsilon_{10}.
	\end{align*}
	Thus we have found integers $z_{0},z_{1}$ with
	\[
	-\frac{\log |z_{1}\zeta-z_{0}|_{p}}{\log \max\{|z_{0}|,|z_{1}|\}}\geq \frac{\mu^{2} - \mu}{2\mu -1} - \varepsilon_{10}.
	\]
	Thereby, as $\varepsilon_{10}$ can be arbitrarily small, if
	\begin{equation}  \label{eq:str2}
	\mu>\frac{5+\sqrt{17}}{2},
	\end{equation}
	then $\frac{\mu^{2} - \mu}{2\mu -1} > 2$ and 
	we have constructed an approximation of order greater than $2$
	to $\zeta$. If $\epsilon>0$ from 
	Theorem~\ref{hopefully} for our $\zeta$ 
	has been chosen small enough (depending on the given $\mu$), concretely for
	\[
	\epsilon= \frac{1}{2} \biggl( \frac{\mu^{2} - \mu}{2\mu -1} - 2  \biggr),
	\]
	by the assumptions of the theorem and since 
	 ${\rm gcd}(x,y)=1$
	and ${\rm gcd}(x_{j,0},x_{j,1}) = 1$, 
	this implies 
	$(x+yu^{(h)},y)= (x_{j,0},x_{j,1})$ for some $j$. 
	We assume this is the case and will derive a contradiction. 
	
	We first show that $j$ cannot exceed $h$.  
	Note that the case $j=h+1$ has already been treated in Case 1. 
	Since $x_{j,0}\asymp |x_{j,1}|$ and $|u^{(j)}|$ tends to infinity with $j$
	by \eqref{eq:uinorm}, 
	we must have
	\[
	|x|\asymp |y|\cdot |u^{(h)}|\asymp x_{j,0}\cdot p^{\tau_h}\asymp x_{j,0}\cdot x_{h,0}^{\tau_h /\sigma_h }\asymp x_{j,0}\cdot x_{h,0}^{t\mu^2 }.
	\]
	If $j\geq h+2$ then this clearly contradicts $|x|\leq H\ll x_{h+1,0}$
	from \eqref{eq:3exp}. 
	Now, we assume that $j \le h$. 
	It follows from \eqref{eq:z2} that 
	\begin{align*}
	|\xi y-x|_{p} &= | \xi x_{j,1} - x_{j,0} + x_{j,1}u^{(h)}|_{p} \\
	&= | (u^{(h)}+\xi-\zeta) x_{j,1} + (\zeta x_{j,1}- x_{j,0})|_{p} \\
	&\geq |\zeta x_{j,1}- x_{j,0}|_{p} - |(u^{(h)}+\xi-\zeta) x_{j,1}|_{p}\\
	&\gg x_{j,0}^{-t\mu} - x_{h+1,0}^{-t\mu}.
	\end{align*}
	Now the crude estimate $x_{h+1,0}\gg  x_{h,0}^{ \sigma_{h+1}/\sigma_{h} }\gg x_{h,0}^{ \tau_{h}/\sigma_{h} }
	\gg x_{h,0}^{t\mu^2}\gg x_{h,0}^{4}\gg x_{j,0}^{4}$ suffices to derive
	\[
	|\xi y-x|_{p} \gg x_{j,0}^{-t\mu}.
	\]
	But on the other hand by assumption \eqref{eq:westen} and
	\eqref{eq:weshow} we get
	\[
	|\xi y-x|_{p} \ll H^{-\mu}\ll x_{j,0}^{-t\mu^2(\mu-1)}.
	\]
	The combination of the latter inequalities gives the desired contradiction.

  We see that $\mu>(5+\sqrt{17})/2$
	is the most restrictive one among the conditions 
	\eqref{eq:str1}, \eqref{eq:str2}  
	we have collected on the way, which imposes $\mu^{\times}=2\mu >  5+\sqrt{17}$, that is, 
	the restriction made in the theorem.

	Finally we show  
	\begin{equation}  \label{eq:spezialfall}
	\mu^{\astmult}(\xi) \leq 2\mu.
	\end{equation}
	We again distinguish between rationals $y_{j,0}/y_{j,1}$ and
	other rationals. Concerning the first type, 
	again because $|y_{j,0}|$ and $|y_{j,1}|$ are of comparable size
	and from \eqref{eq:sueden} we indeed derive that 
	\[
	|y_{j,1}\xi-y_{j,0}|_{p} \gg \left(\sqrt{|y_{j,0} \cdot y_{j,1}|}\right)^{- t \mu -\varepsilon_{11} }.
	\]
	Thus the exponent restricted to this case satisfies 
	$\mu^{\astmult}(\xi)\leq t \mu+\varepsilon_{11}\leq 2\mu +\varepsilon_{11}$.
	Since $\varepsilon_{11}$ can be taken arbitrarily small the claim follows.
	Finally in the latter case where $(x,y)\neq (y_{j,0},y_{j,1})$
	we conclude with \eqref{eq:osten} via 
	\[
	|y \xi-x|_{p} \gg \max\{ |x|,|y| \}^{- \mu -\varepsilon_3}\geq
	\left(\sqrt{|xy|} \right)^{-2\mu -2\varepsilon_3},
	\]
	again giving $\mu^{\astmult}(\xi)\leq 2\mu +2\varepsilon_3$ 
	and the claim \eqref{eq:spezialfall} follows as $\varepsilon_3$
	can be taken arbitrarily small.
	The proof of \eqref{eq:total} and thus of the theorem is complete.
	\end{proof}

\section{Proofs of Theorem~\ref{endlich} and Theorem~\ref{neu} }  \label{se8}

\begin{proof}[Proof of the first assertion \eqref{eq:andere} 
	of Theorem~\ref{endlich}]
Assume that for some $\mu\geq 2$, there exist non-zero coprime 
integers $x,y$ with $|x y|$ arbitrarily large and such that 
\begin{equation}  \label{eq:hha}
|y \xi- x|_{p} = (Q^{\astmult})^{ -\mu}, \quad Q^{\astmult} =\sqrt{|xy|}.
\end{equation}
Set $Q=\max\{ |x|,|y|\}$. Define $A$ in $[1,2]$ and $\tau$ by 
\[
Q=(Q^{\astmult})^{ A}, \quad \tau= \frac{\mu -A}{2}.
\]
In the sequel, $\varepsilon_1, \varepsilon_2, \ldots$ denote
positive real numbers that can be taken arbitrarily small as $Q^{\times}$ tends to infinity. 
Set
\[
X= (Q^{\astmult})^{ \tau-\varepsilon_1}.
\]
By \eqref{eq:hha}, for any positive integer $M$ with 
$\sqrt{| Mx \cdot My | }=MQ^{\astmult}\leq X$, we have
\begin{align}  
|My \xi-Mx|_{p} \ge M^{-1}(Q^{\astmult})^{ -\mu } & \geq  \frac{Q^{\astmult}}{X }(Q^{\astmult})^{ -\mu } 
\nonumber  \\
&\geq X^{-\frac{\mu - 1}{\tau}-1 - \varepsilon_{2} }= X^{-\frac{2(\mu - 1)}{\mu - A}-1 - \varepsilon_{2} }.  
 \label{eq:othcase0}
\end{align}
Consider an integer 
pair $(\tilde{x},\tilde{y})$ linearly independent to $(x,y)$
and with $\sqrt{|\tilde{x}\tilde{y}|}\leq X$. 
Set $\tilde{X}:=\max\{ |\tilde{x}|,|\tilde{y}| \}$ and observe that $\tilde{X} \leq X^{2}$.
By construction, we have 
\[
|y \xi - x|_{p}  = 
(Q^{\astmult})^{ -\mu } = Q^{-\frac{\mu}{A} }
\le Q^{-1  }X^{-2} \le Q^{-1  }\tilde{X}^{-1},
\]
where we have used that 
\[
X^{2} = (Q^{\astmult})^{\mu -A-2 \varepsilon_1} =
Q^{\frac{\mu -A}{A} -\varepsilon_3 }.
\]
It then follows from Lemma~\ref{padicle} that 
\begin{align}   
|\tilde{y}\xi - \tilde{x}|_{p} \gg Q^{-1} \tilde{X}^{-1}\gg Q^{-1}X^{-2}
& = X^{-2- \frac{A}{\tau-\varepsilon_1} } \nonumber  \\
& = X^{-2- \frac{A}{\tau} -\varepsilon_{4}}=
 X^{- \frac{2 \mu}{\mu -A} -\varepsilon_{4}}.   \label{eq:vergleich}     
\end{align}
Since $\mu$ can be chosen arbitrarily close to $\mu^{\astmult}(\xi)$, we deduce from 
\eqref{eq:othcase0} and \eqref{eq:vergleich} that 
\begin{equation}  \label{eq:hat}
\hmu^{\astmult}(\xi) \leq  \sup_{A\in[1,2] } 
\max \left\{ \frac{3\mu^{\astmult}(\xi) - 2 - A}{\mu^{\astmult}(\xi)-A} 
\; , \;   \frac{2 \mu^{\astmult}(\xi) }{\mu^{\astmult}(\xi) -A} \right\}.       
\end{equation}
For $\mu^{\astmult}(\xi)\geq 4$ it is readily checked that, for any $A$ in $[1,2]$, the quantity on the
left is greater than or equal to the quantity on the right. 
Hence, for $\mu^{\astmult}(\xi)\geq 4$, we 
have proved that 
\begin{equation}  \label{eq:hut}
\hmu^{\astmult}(\xi) \leq  \sup_{A\in[1,2] }  \frac{3\mu^{\astmult}(\xi) - 2 - A}{\mu^{\astmult}(\xi)-A}  
= 3+\frac{2}{\mu^{\astmult}(\xi)-2}. 
\end{equation}
Since \eqref{eq:hut} clearly holds if $\mu^{\astmult}(\xi) < 4$ (recall that $\hmu^{\astmult}(\xi) \leq 4$ is    
always true), this proves   
the first claim \eqref{eq:andere} of the theorem. 
\end{proof}

Actually, in the preceding proof, we have shown a slightly stronger result than \eqref{eq:hat}, 
which we state in the following corollary for later use. 

\begin{corollary}  \label{KOR}
Let $\xi$ be an irrational $p$-adic number. 
Assume that there exist $A$ in $[1, 2]$
and an infinite sequence $\cS$ of pairs of nonzero integers $(x, y)$ such that
$$
\limsup_{(x,y) \in \cS, \max\{|x|, |y|\} \to \infty} \, \frac{- \log |y \xi- x|_{p}}{ \log \sqrt{|xy|}} = \mu^{\astmult}(\xi)
$$
and
$$
\lim_{(x,y) \in \cS, \max\{|x|, |y|\} \to \infty} \, \frac{ \log \max\{|x|, |y|\}}{ \log \sqrt{|xy|}} = A. 
$$
Then, we have 
\begin{equation}  \label{eq:hatbis}
\hmu^{\astmult}(\xi) \leq   \max \left\{ \frac{3\mu^{\astmult}(\xi) - 2 - A}{\mu^{\astmult}(\xi)-A} 
\; , \;   \frac{2 \mu^{\astmult}(\xi) }{\mu^{\astmult}(\xi) -A} \right\}. 
\end{equation}
In particular, if we have $A=1$, that is, if $|x|$ and $|y|$ are of comparable size for every pair 
$(x, y)$ in $\cS$, then we obtain $\hmu^{\astmult}(\xi) \leq 3$.
\end{corollary}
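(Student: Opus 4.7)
The proof proposal is to reproduce the argument establishing \eqref{eq:andere} essentially verbatim, but with the value of $A$ fixed by the hypothesis on $\cS$ rather than optimized by a supremum at the very end. In that proof, each coprime pair $(x,y)$ with $|y\xi - x|_p = (Q^{\astmult})^{-\mu}$ and $Q = (Q^{\astmult})^A$ produced, at the scale $X := (Q^{\astmult})^{\tau - \varepsilon_1}$ with $\tau = (\mu - A)/2$, two lower bounds valid for every candidate pair $(x',y')$ with $\sqrt{|x'y'|} \leq X$: inequality \eqref{eq:othcase0} for multiples $M(x,y)$, and inequality \eqref{eq:vergleich} for any pair linearly independent to $(x,y)$ via Lemma~\ref{padicle}. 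Taking the worse of the two exponents yielded the right hand side of \eqref{eq:hat}, where finally a supremum over $A \in [1,2]$ was taken. Under the present hypotheses that supremum becomes superfluous because $A$ is prescribed.

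Concretely, I would pick $\mu$ arbitrarily close to $\mu^{\astmult}(\xi)$ and extract from $\cS$ an infinite subsequence along which the two defining asymptotic identities force $|y\xi - x|_p \leq (Q^{\astmult})^{-\mu}$ together with $Q = (Q^{\astmult})^{A + o(1)}$, with $Q^{\astmult} \to \infty$. After restricting to primitive pairs (which alters asymptotics only by bounded constants), every candidate pair $(x',y')$ at scale $X$ is either proportional to the fixed $(x,y) \in \cS$, hence an integer multiple of the primitive representative, or linearly independent to it, so the two bounds cover all cases. Letting $\mu \uparrow \mu^{\astmult}(\xi)$ and $\varepsilon_1 \downarrow 0$ produces \eqref{eq:hatbis} exactly as in the proof of \eqref{eq:andere}.

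For the special case $A = 1$, the first exponent simplifies to $(3\mu^{\astmult}(\xi) - 3)/(\mu^{\astmult}(\xi) - 1) = 3$, while the second, $2\mu^{\astmult}(\xi)/(\mu^{\astmult}(\xi) - 1)$, is a decreasing function of $\mu^{\astmult}(\xi)$ attaining the value $3$ at $\mu^{\astmult}(\xi) = 3$. For $\mu^{\astmult}(\xi) \geq 3$ the maximum therefore equals $3$ and the claim is immediate. The main obstacle is the complementary range $\mu^{\astmult}(\xi) \in [2,3)$, in which the second exponent exceeds $3$; here the formula itself does not suffice. The cleanest way around this is to invoke the elementary inequality $\hmu^{\astmult}(\xi) \leq \mu^{\astmult}(\xi)$, which follows directly from the two definitions: any witness $(x,y)$ for the uniform multiplicative exponent at scale $X$ automatically satisfies $|y\xi - x|_p \leq X^{-\hmu^{\astmult}(\xi)} \leq (\sqrt{|xy|})^{-\hmu^{\astmult}(\xi)}$ and thus contributes to the classical multiplicative exponent. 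Consequently $\hmu^{\astmult}(\xi) < 3$ is automatic when $\mu^{\astmult}(\xi) < 3$, completing the proof of the $A = 1$ case in either range.
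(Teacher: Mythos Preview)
Your proposal is correct and matches the paper's approach exactly: the paper simply states that \eqref{eq:hatbis} ``comes directly from the proof of Theorem~\ref{endlich} above,'' meaning precisely the rerun of \eqref{eq:othcase0}--\eqref{eq:vergleich} with $A$ prescribed rather than optimized, and for $A=1$ it observes that the first term equals $3$ while the second is $\le 3$ once $\mu^{\astmult}(\xi)\ge 3$. Your treatment of the complementary range $\mu^{\astmult}(\xi)<3$ via the trivial inequality $\hmu^{\astmult}(\xi)\le \mu^{\astmult}(\xi)$ is in fact a detail the paper leaves implicit; one small caution is that the rerun needs the \emph{equality} $|y\xi-x|_p=(Q^{\astmult})^{-\mu}$ (not merely $\le$) to get the lower bound in \eqref{eq:othcase0}, which the limsup hypothesis indeed supplies along a subsequence.
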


\begin{proof}
	The estimate \eqref{eq:hatbis} comes directly from the proof
	of Theorem~\ref{endlich} above.
	For the last assertion, observe that when $A=1$ the left hand side of \eqref{eq:hatbis} is equal to $3$, 
	while the right hand side is at most equal to $3$ when $\mu^{\astmult}(\xi)\geq 3$.
\end{proof}

For a given $p$-adic number $\xi$,
we define the sequence of multiplicative best approximation pairs 
$((x_{k}^{\astmult},y_{k}^{\astmult}))_{k \ge 1}$
in a similar way as for the usual approximation problem, by 
looking at the pair of {\em coprime} integers $(x,y)$ minimizing
$|y \xi- x|_{p}$ among all the 
integer pairs with $0<\sqrt{|xy|}\leq X$, and letting the positive real number 
$X$ grow to infinity. 
Write $Q_{k}^{\astmult}= \sqrt{|x_{k}^{\astmult} y_{k}^{\astmult}|}$ for $k \ge 1$.
By construction, we have
$$
Q_{1}^{\astmult}<Q_{2}^{\astmult}<\cdots, \quad |y_{1}^{\astmult}\xi - x_{1}^{\astmult}|_{p} 
> |y_{2}^{\astmult}\xi - x_{2}^{\astmult}|_{p} > \cdots.
$$
Furthermore, $|y_{k+1}^{\astmult}\xi - x_{k+1}^{\astmult}|_{p}$ is smaller than
$|My_{k}^{\astmult}\xi - Mx_{k}^{\astmult}|_{p}$ as soon as the 
positive integer $M$ satisfies $M<Q_{k+1}^{\astmult}/Q_{k}^{\astmult}$.
Observe that, by the remark on the coprimality of $x$ and $y$ following Definition \ref{def1}, we have 
$$
\mu^{\astmult}(\xi) = \limsup_{k \to \infty} 
\, \frac{- \log |y_{k}^{\astmult}\xi - y_{k}^{\astmult}|_{p}}{\log Q_k^{\astmult}} 
$$
and
$$ 
\widehat{\mu}^{\astmult}(\xi) 
= 1 + \liminf_{k \to \infty} \, \frac{- \log |y_{k}^{\astmult}\xi - y_{k}^{\astmult}|_{p} -
\log Q_{k}^{\astmult} }{\log Q_{k+1}^{\astmult}}. 
$$

We begin with an auxiliary result on the sequence of best approximations.

\begin{lemma}  \label{hilfl}
	With the above notation, we have
	\[
	\limsup_{k \to \infty} \, \frac{\log Q_{k+1}^{\astmult}}{\log Q_{k}^{\astmult} } 
	\leq \frac{\mu^{\astmult}(\xi) - 1}{\hmu^{\astmult}(\xi) - 1}.
	\]
\end{lemma}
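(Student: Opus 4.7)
The plan is to apply the defining property of $\hmu^{\astmult}(\xi)$ at scales $X$ slightly below $Q_{k+1}^{\astmult}$, decompose the resulting good approximation as a positive-integer multiple of a coprime pair, and use the fact that the $(x_k^{\astmult},y_k^{\astmult})$ are by construction \emph{coprime} best approximations to lower bound the error on the coprime part.

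First, fix $\hat\mu<\hmu^{\astmult}(\xi)$ and $\tilde\mu>\mu^{\astmult}(\xi)$, and set $\tau_k^{\astmult}=-\log|y_k^{\astmult}\xi-x_k^{\astmult}|_p/\log Q_k^{\astmult}$. From the identity $\mu^{\astmult}(\xi)=\limsup_k\tau_k^{\astmult}$ recalled just before the lemma, there exists $k_0$ with $\tau_k^{\astmult}\le\tilde\mu$ for every $k\ge k_0$; the definition of $\hmu^{\astmult}$ produces, for each sufficiently large $X$, nonzero integers $x,y$ with $\sqrt{|xy|}\le X$ and $|y\xi-x|_p\le X^{-\hat\mu}$. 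For each large $k$ I would apply this at $X=Q_{k+1}^{\astmult}-1$ and decompose $x=dx''$, $y=dy''$ with $d$ a positive integer and $(x'',y'')$ a coprime pair. Since $\sqrt{|x''y''|}=\sqrt{|xy|}/d\le X/d<Q_{k+1}^{\astmult}$, the index $k'$ defined by $Q_{k'}^{\astmult}\le\sqrt{|x''y''|}<Q_{k'+1}^{\astmult}$ satisfies $k'\le k$, and by the very definition of the multiplicative best approximation sequence one has $|y''\xi-x''|_p\ge (Q_{k'}^{\astmult})^{-\tau_{k'}^{\astmult}}$.

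Using the elementary bound $|d|_p\ge 1/d$ (valid for any nonzero integer $d$) and the constraint $d\le X/\sqrt{|x''y''|}\le X/Q_{k'}^{\astmult}$, the two estimates combine as
\[
X^{-\hat\mu}\;\ge\;|d|_p\,|y''\xi-x''|_p\;\ge\;\frac{1}{d}\,(Q_{k'}^{\astmult})^{-\tau_{k'}^{\astmult}}\;\ge\;\frac{Q_{k'}^{\astmult}}{X}\,(Q_{k'}^{\astmult})^{-\tau_{k'}^{\astmult}},
\]
which rearranges to $X^{\hat\mu-1}\le (Q_{k'}^{\astmult})^{\tau_{k'}^{\astmult}-1}$. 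For $k'\ge k_0$ the right-hand side is at most $(Q_k^{\astmult})^{\tilde\mu-1}$, while for the finitely many $k'<k_0$ it is bounded by a fixed constant $C$. Taking logarithms, substituting $X=Q_{k+1}^{\astmult}-1$, dividing by $\log Q_k^{\astmult}$, and taking $\limsup_k$ gives $\limsup_k \log Q_{k+1}^{\astmult}/\log Q_k^{\astmult}\le (\tilde\mu-1)/(\hat\mu-1)$; letting $\hat\mu\uparrow\hmu^{\astmult}(\xi)$ and $\tilde\mu\downarrow\mu^{\astmult}(\xi)$ yields the lemma.

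The main obstacle is handling the case where the coprime part $(x'',y'')$ is much smaller than $X$, so that $d$ is large; here the crude estimate $|d|_p\ge 1/d$ is essential and produces exactly the ratio $(\mu^{\astmult}-1)/(\hmu^{\astmult}-1)$, whereas the naive coprime argument (which would apply if $d=1$ throughout) yields the strictly stronger but incorrect bound $\mu^{\astmult}/\hmu^{\astmult}$. One must also check carefully that the indices $k'<k_0$, where $\tau_{k'}^{\astmult}$ is not yet controlled by $\tilde\mu$, contribute only an $O(1)$ additive term after taking logarithms, and therefore do not affect the $\limsup$ once we divide by $\log Q_k^{\astmult}\to\infty$.
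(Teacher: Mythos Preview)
Your argument is correct. It is essentially the same idea as the paper's proof, but you work from first principles whereas the paper invokes the identity
\[
\widehat{\mu}^{\astmult}(\xi) = 1 + \liminf_{k \to \infty}\,\frac{-\log |y_{k}^{\astmult}\xi - x_{k}^{\astmult}|_{p} - \log Q_{k}^{\astmult}}{\log Q_{k+1}^{\astmult}},
\]
stated without proof just before the lemma. Granting this formula, the paper simply combines $(\mu^{\astmult}(\xi)+\varepsilon)\log Q_k^{\astmult}\ge -\log|y_k^{\astmult}\xi-x_k^{\astmult}|_p$ with $(\widehat{\mu}^{\astmult}(\xi)-1-\varepsilon)\log Q_{k+1}^{\astmult}\le -\log|y_k^{\astmult}\xi-x_k^{\astmult}|_p-\log Q_k^{\astmult}$ and is done in four lines. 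Your decomposition $(x,y)=d\cdot(x'',y'')$ together with $|d|_p\ge 1/d$ and $d\le X/Q_{k'}^{\astmult}$ is precisely what justifies the $\ge$ direction of that liminf formula, so your proof is more self-contained but correspondingly longer. One small cosmetic point: your treatment of $k'<k_0$ could be stated more cleanly by observing that the bound $X^{\hat\mu-1}\le C$ forces $X$ bounded, hence for all sufficiently large $k$ the case $k'<k_0$ simply cannot occur; this avoids carrying the constant $C$ into the $\limsup$.
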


\begin{proof}
By the definitions of the limsup and of the liminf, we get 
that, for every $\varepsilon > 0$ and every large $k$, we have
$$
(\mu^{\astmult}(\xi) + \varepsilon) \log Q_k^{\astmult} \ge - \log |y_{k}^{\astmult}\xi - x_{k}^{\astmult}|_{p}  
$$
and
$$
(\widehat{\mu}^{\astmult}(\xi) - 1 -  \varepsilon)  \log Q_{k+1}^{\astmult} 
\le - \log |y_{k}^{\astmult}\xi - x_{k}^{\astmult}|_{p}  - \log Q_{k}^{\astmult}. 
$$
This gives
$$
(\widehat{\mu}^{\astmult}(\xi) - 1 -  \varepsilon)  \log Q_{k+1}^{\astmult} \le 
(\mu^{\astmult}(\xi) - 1 + \varepsilon) \log Q_k^{\astmult},
$$
and the lemma follows. 
\end{proof}

\begin{proof}[Proof of Theorem~\ref{neu}]

	We establish (i) and observe that (ii) can be proved analogously.
	
	Assume that we have $|x_{k}^{\astmult} | \gg |y_{k}^{\astmult}|$ 
	and $|x_{k+1}^{\astmult}| \gg |y_{k+1}^{\astmult}|$.	
	Recall that $Q_{k}^{\astmult}=\sqrt{ |x_{k}^{\astmult} y_{k}^{\astmult} |}$. 
	Define $\alpha_k$ and $\beta_k$ by 
	 \[
	 (Q_{k}^{\astmult})^{ \alpha_{k}}= |x_{k}^{\astmult}|,\quad (Q_{k}^{\astmult})^{ \beta_{k}}= |y_{k}^{\astmult}|,
	 \]
	 and note that $\alpha_{k}+\beta_{k}=2$.
	Define $\mu_k$ (it would be more appropriate to write $\mu_k^\astmult$, but for the sake of readability
	we choose to drop the ${}^\astmult$) by 
	\[
	|y_{k+1}^{\astmult} \xi - x_{k+1}^{\astmult}|_{p} < |y_{k}^{\astmult} \xi - x_{k}^{\astmult}|_{p} 
	= (Q_{k}^{\astmult})^{-\mu_k}.
	\]
	Now, as in Lemma~\ref{padicle}, we get
	\[
	|x_{k+1}^{\astmult}y_{k}^{\astmult} - x_{k}^{\astmult}y_{k+1}^{\astmult}|_{p} 
	= |y_{k+1}^{\astmult}(y_{k}^{\astmult} \xi - x_{k}^{\astmult}) 
	- y_{k}^{\astmult}(y_{k+1}^{\astmult} \xi - x_{k+1}^{\astmult})|_{p}
	\leq (Q_{k}^{\astmult})^{-\mu_k}.
	\]
	Thus,
	\[
	|x_{k+1}^{\astmult}y_{k}^{\astmult} - x_{k}^{\astmult}y_{k+1}^{\astmult}| \geq
	(Q_{k}^{\astmult})^{ \mu_k}, 
	\]
	which implies that 
	\[
	\max\{ |x_{k+1}^{\astmult}y_{k}^{\astmult}| , |x_{k}^{\astmult}y_{k+1}^{\astmult}| \}  \gg (Q_{k}^{\astmult})^{ \mu_k}.
	\]
	Set $\delta_{k}= \min\{ \alpha_{k}, \beta_{k} \}$. Since $|x_k^{\astmult}| \gg |y_k^{\astmult}|$, we may 
	assume that $\delta_{k}=  \beta_{k}$ (if necessary, we absorb the numerical constant in $\ll$). 
	We have either
	\[
	|x_{k+1}^{\astmult}| \gg
	\frac{(Q_{k}^{\astmult})^{ \mu_k} }{|y_{k}^{\astmult}|}= 
	(Q_{k}^{\astmult})^{ \mu_k-\beta_{k}}, 
	\]
	which gives
	\[
	Q_{k+1}^{\astmult} = \sqrt{ |x_{k+1}^{\astmult} y_{k+1}^{\astmult} | } \geq
	\sqrt{|x_{k+1}^{\astmult}|} \gg  (Q_{k}^{\astmult})^{ \frac{\mu_k -\delta_{k} }{2}  }, 
	\]
	or
	\[
	|x_{k+1}^{\astmult}| \gg |y_{k+1}^{\astmult}| \gg 
	\frac{(Q_{k}^{\astmult})^{ \mu_k} }{|x_{k}^{\astmult}|} = (Q_{k}^{\astmult})^{ \mu_k -\alpha_{k} } \gg 
	 (Q_{k}^{\astmult})^{ \mu_k- 2+\delta_{k}}, 
	\]
	which gives
	\[
	Q_{k+1}^{\astmult} = \sqrt{ |x_{k+1}^{\astmult} y_{k+1}^{\astmult} | } \geq
	 (Q_{k}^{\astmult})^{ \mu_k-2+\delta_{k}}.
	\]
	To sum up, we have proved that 
	\[
	\frac{ \log Q_{k+1}^{\astmult} }{ \log Q_{k}^{\astmult} } \geq 
	\min \Bigl\{  \frac{\mu_k-\delta_{k} }{2} , \mu_k - 2 +\delta_{k} \Bigr\}.
	\]
	Let $\eps > 0$ be a given real number. 
	For $k$ large enough, it then follows from the proof of Lemma~\ref{hilfl} that 
	\begin{equation}  \label{eq:mutmu}
	\frac{ \log Q_{k+1}^{\astmult} }{ \log Q_{k}^{\astmult} } \leq 
	\frac{\mu_k - 1} { \hmu^{\astmult}(\xi) - 1 } + \eps.
	\end{equation}
	We deduce that 
	\begin{align*}
	\hmu^{\astmult}(\xi) & \leq 1 + \max  \Bigl\{  \frac{2\mu_k - 2}{\mu_k -\delta_{k}}, 
	\frac{\mu_k - 1}{\mu_k-2+\delta_{k}} \Bigr\} + \eps \\
	& \leq 1 + \max  \Bigl\{  \frac{2\mu_k - 2}{\mu_k -1}, 
	\frac{\mu_k - 1}{\mu_k-2} \Bigr\} + \eps
	= 1 + \max  \Bigl\{  2, 
	\frac{\mu_k - 1}{\mu_k-2} \Bigr\} + \eps.
	\end{align*}
	If $\mu_k \le 3$ for arbitrarily large $k$ as above, then the upper bound
	$\hmu^{\astmult}(\xi) \leq  3$ follows from \eqref{eq:mutmu}. 
	Otherwise, we have $\mu_k > 3$ for every sufficiently large $k$ and, 
	since $\eps$ can be taken arbitrarily small, we conclude that 
	$\hmu^{\astmult}(\xi) \leq  3$, as asserted. 
        \end{proof}

\begin{proof}[Proof of the second assertion \eqref{eq:jnik} of Theorem~\ref{endlich}]
First, note that \eqref{eq:jnik} clearly holds  when $\hmu^{\astmult}(\xi)  \leq 3$, since we then have 
$$
\hmu^{\astmult}(\xi)^2 - 3 \hmu^{\astmult}(\xi)  + 3 \leq \hmu^{\astmult}(\xi)  \leq \mu^{\astmult}(\xi).
$$
Consequently, we assume throughout this proof 
that $\hmu^{\astmult}(\xi) > 3$. By \eqref{eq:andere}, we then have 
$$
3 < \mu^{\astmult}(\xi) < + \infty.
$$
Observe also that \eqref{eq:jnik} can be rewritten as
\begin{equation}  \label{eq:bb}
	\htmu(\xi)\leq \frac{3+\sqrt{4\mu^{\astmult}(\xi)-3}}{2}.
	\end{equation}

         Define $\mu_k$ by
	\[
	|x_{k}^{\astmult}\xi-y_{k}^{\astmult}|_{p} = 
	\Bigl( \sqrt{|x_{k}^{\astmult}y_{k}^{\astmult}| } \Bigr)^{-\mu_k}=
	(Q_{k}^{\astmult})^{-\mu_k}, 
	\]
	 and $\alpha_{k}, \beta_{k}, \delta_{k}$ by
	\[
	(Q_{k}^{\astmult})^{ \alpha_{k}}= |x_{k}^{\astmult}|,\quad (Q_{k}^{\astmult})^{ \beta_{k}}= |y_{k}^{\astmult}|, \quad \delta_{k}=\min\{ \alpha_{k},\beta_{k} \}.
	\]
	We can assume that    
	$$ 
	\mu^{\astmult}(\xi)= \limsup_{\ell \to \infty} \, \mu_{2 \ell},
	$$ 
	and, in view of Theorem~\ref{neu},  that for all large even integers $k$ we have 
	$$ 
	|x_{k}^{\astmult}| > |y_{k}^{\astmult}|, \quad |x_{k+1}^{\astmult}| < |y_{k+1}^{\astmult}|. 
	$$ 
    Then $\delta_k = \beta_k$. Below, $k$ denotes a sufficiently large even integer.
	
	Let $\eps > 0$ be a given real number. 	Proceeding as in the preceding proof, 
	but with the pairs $(x_{k}^{\astmult}, y_{k}^{\astmult})$
	and $(x_{k+2}^{\astmult}, y_{k+2}^{\astmult})$ which 
	satisfy the inequalities $|x_{k+2}^{\astmult} | > |y_{k+2}^{\astmult}|$ and 
	$|x_{k}^{\astmult}| > |y_{k}^{\astmult}|$, we get
	\[
	\frac{ \log Q_{k+2}^{\astmult} }{ \log Q_{k}^{\astmult} } \geq 
	\min\left\{  \frac{\mu_k -\delta_{k} }{2} ,\; \mu_k-2+\delta_{k}  \right\} + \eps,
	\]
	for $k$ large. 
	On the other hand, from the proof of Lemma~\ref{hilfl} we get 
	\[
	\frac{ \log Q_{k+2}^{\astmult} }{ \log Q_{k}^{\astmult} } = 
	\frac{ \log Q_{k+2}^{\astmult} }{ \log Q_{k+1}^{\astmult} } \cdot \frac{ \log Q_{k+1}^{\astmult} }{ \log Q_{k}^{\astmult} } \leq
	\left(\frac{\mu_k - 1}{ \hmu^{\astmult}(\xi) - 1}\right)^{2}+ \eps. 
	\]
	The combination of the latter inequalities gives
	\[
	\hmu^{\astmult}(\xi)  \leq 1 +  
	\sqrt{ \max\Bigl\{ \frac{2(\mu_k-1)^{2} }{\mu_k -\delta_{k} }, 
		\frac{ (\mu_k-1)^{2} }{\mu_k-2+\delta_{k}   }  \Bigr\} } + \tilde{\eps},
	\]
	where $\tilde{\eps}$ tends to $0$ with $\eps$. 
	By considering a sequence $(k_j)_{j \ge 1}$ of even integers such that
	$$
	\mu^{\astmult}(\xi)= \limsup_{j \to \infty} \, \mu_{k_j}
	$$ 
	and putting
	$$
	\overline{\delta}= \limsup_{j \to \infty} \delta_{k_j},
	$$
	we deduce that 
	$$
	\hmu^{\astmult}(\xi)  \leq 1 +  
	  \max\biggl\{ \sqrt{ \frac{2}{\mu^{\astmult}(\xi) - \overline{\delta} } } (\mu^{\astmult}(\xi)-1), 
		\frac{ \mu^{\astmult}(\xi) -1}{\sqrt{\mu^{\astmult}(\xi) -2+\overline{\delta} }}   \biggr\}. 
	$$
	Observe that in the maximum the right hand term is larger than 
	the left hand term if and only if $\mu^{\astmult}(\xi) \le 4 - 3 \overline{\delta}$. 
	Consequently, if $\mu^{\astmult}(\xi) \le 4 - 3 \overline{\delta} \leq 4$, then we get 
	\[
	\hmu^{\astmult}(\xi) \leq  1 + \frac{\mu^{\astmult}(\xi) - 1}{ \sqrt{\mu^{\astmult}(\xi)-2}}.
	\]
         Taking into account that $\mu^{\astmult}(\xi) > 3$, a rapid 
         calculation shows that this inequality implies \eqref{eq:bb}, as wanted.

	So we may assume $\mu^{\astmult}(\xi) > 4 - 3 \overline{\delta}$ and thus
	$$
	\hmu^{\astmult}(\xi)  \leq 1 + \sqrt{ \frac{2}{\mu^{\astmult}(\xi) - \overline{\delta} } } (\mu^{\astmult}(\xi)-1). 
	$$
	Observe that Corollary~\ref{KOR} applied with $A =2-\overline{\delta}$ gives
	$$
	\hmu^{\astmult}(\xi) \leq \max \left\{ \frac{3\mu^{\astmult}(\xi) - 2 - A}{\mu^{\astmult}(\xi)-A}, \;  
\frac{2 \mu^{\astmult}(\xi)}{\mu^{\astmult}(\xi) -A}  \right\},  
	$$
	where the maximum 
	is given by the left hand term if and only if we have $A\leq \mu^{\astmult}(\xi)-2$.
	We distinguish two cases. 
\\

		Case 1: Assume that $A\leq \mu^{\astmult}(\xi)-2$, that is, 
		$\mu^{\astmult}(\xi)\geq 4-\overline{\delta}$.
Then
\begin{equation}  \label{eq:J}
		 \htmu(\xi) \leq \min\left\{ \frac{3\mu^{\astmult}(\xi) -4+ \overline{\delta} }{ \mu^{\astmult}(\xi) -2+\overline{\delta} } \;,\;  \sqrt{\frac{2}{\mu^{\astmult}(\xi)- \overline{\delta} }} (\mu^{\astmult}(\xi)-1)+1 \right\}
		 \end{equation}
		 holds. In view of \eqref{eq:bb}, we can assume that 
	 \[
	\frac{3\mu^{\astmult}(\xi) -4 +   \overline{\delta} }{\mu^{\astmult}(\xi) - 2 +   \overline{\delta} } 
	>  \frac{3+\sqrt{4\mu^{\astmult}(\xi)-3}}{2},
	\]
	that is,
	\[
	 \overline{\delta}  
	  < \frac{3 \mu^{\astmult}(\xi) - 2 - (\mu^{\astmult}(\xi) - 2)\sqrt{4\mu^{\astmult}(\xi) - 3}}{1 + \sqrt{4\mu^{\astmult}(\xi) - 3}}.
	 \]
	 Using this bound for $\overline{\delta}$, we derive from \eqref{eq:J} that 
	 \[
	 \htmu(\xi) < 1 + \sqrt{\frac{(\mu^{\astmult}(\xi) - 1) 
	 ( 1 + \sqrt{4\mu^{\astmult}(\xi) - 3})}{\sqrt{4\mu^{\astmult}(\xi) - 3} - 1} }
	 = 1 +  \frac{1+\sqrt{4\mu^{\astmult}(\xi)-3}}{2},
	 \]
	 which gives the bound \eqref{eq:bb}. 
	 \\
	 
	Case 2:    We assume that $A>\mu^{\astmult}(\xi)-2$, that is, 
	$\mu^{\astmult}(\xi)< 4-\overline{\delta}\leq 4$. 
 Then \eqref{eq:hatbis} gives
	\[
	\htmu(\xi) \leq 2+\frac{2A }{\mu^{\astmult}(\xi) -A } = \frac{2\mu^{\astmult}(\xi) }{ \mu^{\astmult}(\xi) -2+\overline{\delta} }
	\]
	and we get
	 \begin{equation} \label{eq:Jbis}
	\htmu(\xi) \leq \min\left\{ \frac{2\mu^{\astmult}(\xi) }{ \mu^{\astmult}(\xi) -2+\overline{\delta} } \;,\;  \sqrt{\frac{2}{\mu^{\astmult}(\xi)- \overline{\delta} }} (\mu^{\astmult}(\xi)-1)+1 \right\}.
	\end{equation}
        In view of \eqref{eq:bb}, we can assume that
	 \[
	\frac{2\mu^{\astmult}(\xi) }{ \mu^{\astmult}(\xi) -2+\overline{\delta} }
	>  \frac{3+\sqrt{4\mu^{\astmult}(\xi)-3}}{2},
	\]
that is,
	\[
	 \overline{\delta}   
	 <  \frac{\mu^{\astmult}(\xi) + 6 - (\mu^{\astmult}(\xi) - 2) \sqrt{4\mu^{\astmult}(\xi) - 3}}{ 3 + \sqrt{4\mu^{\astmult}(\xi) - 3}}.
	 \]
	 Using this bound for $\overline{\delta}$, we derive from \eqref{eq:Jbis} that 
	 \[
	 \htmu(\xi) < 1 + \sqrt{\frac{ (\mu^{\astmult}(\xi) - 1)^2
	 ( 3 + \sqrt{4\mu^{\astmult}(\xi) - 3})}
	 {(\mu^{\astmult}(\xi) - 3) + (\mu^{\astmult}(\xi) - 1)\sqrt{4\mu^{\astmult}(\xi) - 3} } }.
	\]
	A careful computation  
	shows that, since $\mu^{\astmult}(\xi) \ge 3$, we get \eqref{eq:bb}. 
	\end{proof}

As noticed above Corollary \ref{cor:Liouv}, the upper bound \eqref{eq:bndere} 
follows from \eqref{eq:andere} and \eqref{eq:jnik}. The proof of Theorem~\ref{endlich} is complete.


\begin{thebibliography}{99}


\bibitem{BaBu21}
D. Badziahin and Y. Bugeaud,
{\it Multiplicative $p$-adic approximation}, 
Michigan Math. J. 
To appear. 




\bibitem{BMo} 
V. I. Bernik and I. L. Morotskaya, 
{\it Diophantine approximations in $\mathbb{Q}_p$ and Hausdorff dimension}, 
Vestsi Akad. Navuk BSSR Ser. Fiz.-Mat. Navuk 1986, no. 3, 3--9, 123 (in Russian). 


\bibitem{Bu10}
Y. Bugeaud,
{\it On simultaneous uniform approximation to a $p$-adic number and its square}, 
Proc. Amer. Math. Soc. 138 (2010), 3821--3826.

 

\bibitem{BuPe18}
Y. Bugeaud and T. Pejkovic,
Explicit examples of $p$-adic numbers with prescribed irrationality exponent.
Integers 18A (2018), Article A5. 

\bibitem{buschlei} 
Y. Bugeaud and J. Schleischitz, 
{\it On uniform approximation to real numbers}, 
Acta Arith. 175 (2016), 255--268.

\bibitem{BuYao17}
Y. Bugeaud and J.-Y. Yao, 
{\it Hankel determinants, Pad\'e approximations, and irrationality exponents for $p$-adic numbers},  
Ann. Mat. Pura Appl.  196 (2017), 929--946. 
 
\bibitem{EiKl07}
M. Einsiedler and D. Kleinbock,
{\it Measure rigidity and $p$-adic Littlewood-type problems},
Compositio Math. 143 (2007), 689--702.

\bibitem{Jar45}
V. Jarn\'\i k, 
{\it Sur les approximations diophantiques des nombres $p$-adiques}, 
Rev. Ci. (Lima) 47 (1945), 489--505. 

\bibitem{Lutz55}
\'E. Lutz,
Sur les approximations diophantiennes lin\'eaires $p$-adiques. 
Actualit\'es Sci. Ind., no. 1224. Hermann \& Cie, Paris, 1955. 106 pp.

\bibitem{Mah34a}
K. Mahler,
{\it 	Zur Approximation $P$-adischer Irrationalzahlen},
Nieuw Arch. Wisk. 18 (1934), 22--34. 

\bibitem{Mah34b}
K. Mahler,
{\it \"Uber Diophantische Approximationen in Gebiete der $p$-adische Zahlen},
Jahresbericht d. Deutschen Math. Verein. 44 (1934), 250--255.


\bibitem{Mah38}
K. Mahler,
{\it  Ein $p$-adisches Analogon zu einem Satz von Tchebycheff},
Mathematica (Zutphen) B7 (1938), 2--6. 

\bibitem{BdM19}
B. de Mathan,
{\it Sur l'approximation rationnelle $p$-adique},
J. Th\'eor. Nombres Bordeaux 31 (2019), 417--430.   

\bibitem{Mo14}
N. Moshchevitin, 
{\it A note on two linear forms}, 
Acta Arith. 162 (2014), 43--50. 


\bibitem{Roy04}
D. Roy, 
\emph{Approximation to real numbers by cubic algebraic integers I},
Proc. London Math. Soc. 88 (2004), 42--62.


\bibitem{schleinew} 
J. Schleischitz, 
{\it Cartesian product sets in Diophantine approximation with large Hausdorff dimension}, arXiv:2002.08228.



\end{thebibliography}
\end{document}